\documentclass[a4paper,12pt,reqno]{amsart}

\usepackage[letterpaper, hmargin=1in, top=1in, bottom=1.2in, footskip=0.7in]{geometry}
\usepackage{cancel} 

\usepackage{rotating,pdflscape,xcolor,amssymb,subfigure,psfrag,
amsmath,eufrak,bbm,epsfig,amsthm,mathtools,fancyhdr,graphicx}
\definecolor{vdarkred}{rgb}{0.6,0,0.2}
\definecolor{vdarkblue}{rgb}{0,0.2,0.6}
\usepackage[pdftex, colorlinks, linkcolor=vdarkblue,citecolor=vdarkred]{hyperref}
\usepackage{a4wide,amscd}
\usepackage{tikz} 
 \usepackage[usenames,dvipsnames]{pstricks}
\usepackage{placeins}

\usepackage[small]{caption}
\usepackage[normalem]{ulem} 

\newcommand{\om}{\omega}

\newcommand{\beg}{\begin}
\newcommand{\en}{\end}

\renewcommand{\Im}{\mathfrak{Im}}

\newcommand{\bgt}{\begin{itemize}}
\newcommand{\ent}{\end{itemize}}

\newcommand{\la}{\label}

\newcommand{\brem}{\begin{rmk}}
\newcommand{\erem}{\end{rmk}}
\newcommand{\blem}{\begin{lem}}
\newcommand{\elem}{\end{lem}}
\newcommand{\bcor}{\begin{cor}}
\newcommand{\ecor}{\end{cor}}
\newcommand{\bTh}{\begin{Th}}
\newcommand{\eTh}{\end{Th}}
\newcommand{\bpropo}{\begin{propo}}
\newcommand{\epropo}{\end{propo}}

\newcommand{\op}{\operatorname} 

\newcommand{\Var}{\operatorname{Var}}

\newcommand{\diag}{\operatorname{diag}}

\renewcommand{\P}{\mathbb{P}}

\newcommand{\Tr}{\operatorname{Tr}}

\newcommand{\E}{\op{\mathbb{E}}}

\newcommand{\R}{\mathbb{R}}
\newcommand{\C}{\mathbb{C}}

\newcommand{\eps}{\varepsilon}

\newcommand{\bbm}{\begin{bmatrix}}
\newcommand{\ebm}{\end{bmatrix}}
\newcommand{\bes}{\begin{equation*}}
\newcommand{\ees}{\end{equation*}}
\newcommand{\be}{\begin{equation}}
\newcommand{\ee}{\end{equation}}
\newcommand{\beqy}{\begin{eqnarray}}
\newcommand{\eeqy}{\end{eqnarray}}
\newcommand{\beq}{\begin{eqnarray*}}
\newcommand{\eeq}{\end{eqnarray*}}

\newcommand{\bpm}{\begin{pmatrix}}
\newcommand{\epm}{\end{pmatrix}}

\newcommand{\bpr}{\beg{proof}}
\newcommand{\epr}{\en{proof}}

\newcommand{\begenum}{\begin{enumerate}}
\newcommand{\enenum}{\end{enumerate}}

\newtheorem{Th}{Theorem}[section]

\newtheorem{propo}[Th]{Proposition}
\newtheorem{proposition}[Th]{Proposition} 
 
\newtheorem{lem}[Th]{Lemma}

\newtheorem{cor}[Th]{Corollary}
\theoremstyle{definition}
\newtheorem{rmk}[Th]{Remark}

\long\def\symbolfootnote[#1]#2{\begingroup
\def\thefootnote{\fnsymbol{footnote}}\footnote[#1]{#2}\endgroup}

\usepackage{listings}
\author[J.-P.\ Bouchaud]{Jean-Philippe Bouchaud}
\address{Jean-Philippe Bouchaud:  Académie des Sciences, 23 Quai de Conti, 75006 Paris, France and Capital Fund Management, 23 rue de l'Universit\'e, 75007 Paris, France}
\email{Jean-Philippe.Bouchaud@cfm.com}

\author[P. Bousseyroux]{Pierre Bousseyroux} \address{Pierre Bousseyroux: Econophysics Lab, Institut Louis Bachelier, 28 Pl. de la Bourse, Palais Brongniart, 75002 Paris, France and LadHyX, UMR CNRS 7646, École Polytechnique, Institut Polytechnique de Paris, 91128 Palaiseau, France} \email{pierre.bousseyroux@polytechnique.edu}
\author[T. Espana]{Tomas Espana}
\address{Tomas Espana: Department of Operations Research and Financial Engineering, Princeton University, Princeton, NJ, 08540, USA}
\email{tomas.espana@princeton.edu}

\author[M. Smerlak]{Matteo Smerlak}
\address{Matteo Smerlak: Capital Fund Management, 23 rue de l'Universit\'e, 75007 Paris, France}
\email{matteo.smerlak@cfm.com}

\date{\today}
\subjclass[2020]{60B20, 62H20} 
\keywords{Spearman correlation matrices, scale-mixture model, Marčenko-Pastur law}

\title[Spearman Spectra under Scale Mixtures]{Spectra of high-dimensional Spearman correlation matrices under scale-mixture dependence}
\begin{document}
\maketitle
\begin{abstract}
We study the asymptotic spectral properties of high-dimensional Spearman correlation matrices for scale-mixture data. We consider observations of the form $x_t=\sigma_t \xi_t \in \R^N,$ where the coordinates of $\xi_t$ are i.i.d.\ and the scalar mixture variable $\sigma_t$ is shared by all coordinates. Under natural symmetry assumptions, the coordinates of $x_t$ are pairwise uncorrelated in both the Pearson and Spearman sense. Nevertheless, they are not independent when the mixture variable is non-degenerate. We show that this higher-order dependence survives the rank transformation and leaves a nontrivial spectral signature. In the proportional regime $N/T\to q\in(0,\infty),$ the empirical spectral distribution of the Spearman correlation matrix converges almost surely to a generalized Marčenko--Pastur law governed by the limiting distribution of an effective rank variance. We also formulate a broader latent-variable extension, which covers, in particular, some scale-mixture models with correlated directional components. We discuss solvable examples and numerical approximations, motivated in part by heavy-tailed data in robust multivariate statistics, econometrics, and finance.
\end{abstract}


\section{Introduction}

Statistical inference for covariance and correlation matrices is a central problem in high-dimensional multivariate analysis \cite{cai2017global, fan2016overview}. When the dimension $N$ is of the same order as the sample size $T$, classical fixed-dimensional intuition no longer applies: estimation errors in individual entries may accumulate and produce nontrivial spectral effects. Random matrix theory provides a precise framework to understand this phenomenon. In the simplest setting of independent coordinates with identity covariance, the empirical spectral distribution of the sample covariance matrix converges to the Marčenko--Pastur law \cite{marchenko1967distribution}. More generally, when the population covariance is nontrivial, the limiting spectrum is described by generalized Marčenko--Pastur laws \cite{silverstein1995empirical, silverstein1995analysis, silverstein1995strong}.

Rank-based correlation matrices, such as Spearman's rho \cite{spearman1987proof} and Kendall's tau \cite{kendall1938new}, have attracted increasing attention in high-dimensional statistics. Since they depend only on the relative order of the observations, rank correlations are invariant under monotone marginal transformations and are less sensitive to heavy tails and outliers than moment-based quantities. This makes them particularly appealing in robust multivariate statistics and for data sets where second moments may be unstable or even infinite \cite{liu2012high, han2014scale, han2016statistical}. They also provide a natural nonparametric alternative to Pearson correlation matrices while retaining a matrix structure suitable for spectral analysis. In random matrix theory, the limiting spectral distribution (LSD) of Spearman correlation matrices was first studied in \cite{bai2008large} under i.i.d.\ features and later extended to dependent features in \cite{wu2022limiting}, where the LSD is shown to be a generalized Marčenko--Pastur law. More recently, \cite{dornemann2025ties} studied the effect of ties on the spectra of rank-based dependence matrices. Beyond global spectral limits, \cite{bao2019tracy} proved Tracy--Widom fluctuations for the largest eigenvalues of Spearman matrices, and central limit theorems for linear spectral statistics were obtained in \cite{bao-clt-spearman-2015, chen2024large}.

The present paper contributes to this line of work by focusing on a concrete and interpretable dependence mechanism, namely scale mixtures. While \cite{wu2022limiting} provides a general approach to LSDs of Spearman matrices under dependent features, here we compute explicitly the effective rank variance generated by a common latent scale and analyze the resulting generalized Mar\v{c}enko--Pastur law through solvable examples and numerical approximations. We consider observations of the form $x_t=\sigma_t \xi_t \in \R^N,$ where the coordinates of $\xi_t$ are i.i.d.\ and the scalar mixture variable $\sigma_t$ is shared by all coordinates. Such models provide a simple mechanism for dependence through a common latent scale. They are also a natural class in multivariate statistics: when $\xi_t$ is Gaussian and $\sigma_t^2$ is inverse-gamma distributed, $x_t$ has a multivariate Student distribution; more generally, elliptical distributions are closely related to scale mixtures of normal distributions \cite{gomez2006sequences}. Because Spearman matrices depend only on coordinatewise ranks, the same conclusions automatically hold after arbitrary strictly increasing marginal transformations, connecting the model to the transelliptical framework studied in high-dimensional semiparametric statistics \cite{liu2012transelliptical,han2014scale}. Thus, even in the identity-scale case considered here, the model captures a broad class of (heavy-tailed) distributions for which rank-based methods are especially natural. 

The scale-mixture model also has a natural interpretation in econometrics and finance. In the context of asset returns, the variable $\sigma_t$ can be viewed as a systemic volatility factor affecting the whole market at time $t$, while $\xi_t$ describes the idiosyncratic directional component of returns \cite{clark1973subordinated,epps1976stochastic}. Empirical studies further indicate that financial returns and volatility display heavy-tailed behavior \cite{mandelbrot1963variation, liu1999statistical, cont2001empirical}. The same scale-mixture model was studied in \cite{BiroliBouchaudPotters2007} to determine the spectra of Pearson covariance and correlation matrices for Student ensembles. In that setting, the long-tailed fluctuations of the scale variable $\sigma_t$ generate a limiting eigenvalue density with no bounded right edge. This provides an additional motivation for considering rank-based correlation matrices such as Spearman when studying heavy-tailed distributions.

The paper is built around two limiting spectral distribution theorems. First, our main result is Theorem \ref{thm:main}, which is stated for the scale-mixture model with i.i.d.\ directional components. It proves that, in the proportional regime $N/T\to q\in(0,\infty)$, the empirical spectral distribution of the sample Spearman correlation matrix converges almost surely to a deterministic generalized Marčenko--Pastur law. The limiting law is governed by the distribution of an effective rank variance induced by the mixture variable. In particular, the classical Marčenko--Pastur law is recovered only in the degenerate case where this effective variance is deterministic. Thus, the Spearman rank transformation does not wash out the mixture variable: even though magnitudes are discarded, the latent scale remains visible at the spectral level. Second, Theorem \ref{thm:generalized_main} shows that the same mechanism extends to a broader class of latent-variable dependence models, which contains the scale-mixture model as a special case and also includes regime-switching models and one-factor Gaussian scale-mixture models with correlated directional components.

The paper is organized as follows. Section \ref{sec:model-result} introduces the Spearman correlation matrix, the scale-mixture model, and Theorem \ref{thm:main}. Section \ref{sec:solvable_models} presents solvable examples and numerical approximations, including the beta-copula approximation for Student data. Section \ref{sec:gene_model} states the latent-variable extension, Theorem \ref{thm:generalized_main}, and discusses examples including regime-switching and one-factor Gaussian models. Section \ref{sec:proof} contains the technical proofs. Although Theorem \ref{thm:main} is covered by the more general Theorem \ref{thm:generalized_main}, we give the full proof of Theorem \ref{thm:main} first, since the mechanism is most transparent in this setting. The proof of Theorem \ref{thm:generalized_main} is then obtained by adapting the same argument. The Appendix contains the remaining proofs and complementary technical results.

\section{Scale-Mixture Model and Main Result}\la{sec:model-result}

\subsection{Scale-Mixture Model}\la{sec:scale-mixture}

For each $T \geq 1$, let $N=N(T)$ and assume that:
\begin{enumerate}
    \item[(i)] $(\sigma_t)_{1\leq t\leq T}$ are i.i.d.\ copies of an almost surely positive random variable $\sigma$,
    \item[(ii)] $(\xi_{t,n})_{1\leq t\leq T,\, 1\leq n\leq N}$ are i.i.d.\ copies of a real random variable $\xi$ symmetric about $0$,
    \item[(iii)] the families $(\sigma_t)_t$ and $(\xi_{t,n})_{t,n}$ are independent,
    \item[(iv)] the distribution function $F$ of $\sigma\xi$ is continuous. 
\end{enumerate}
For each $1\leq t \leq T$, we consider the observations \be\la{eq:scale-mixture-model}x_t = \sigma_t \xi_t, \qquad \xi_t = (\xi_{t,1},\ldots,\xi_{t,N})^\top.\ee
The scalar $\sigma_t$ is a mixture variable shared by all coordinates of the observation $x_t$, while $\xi_t$ contains the idiosyncratic components. Thus the model introduces dependence across coordinates through a common latent scale. By construction, the vectors $(x_t)_{t\geq 1}$ are i.i.d.; for each fixed $t$, the coordinates $(x_{t,n})_{n\geq 1}$ are independent conditionally on $\sigma_t$, but are generally not independent unconditionally. When the relevant second moments exist, the symmetry assumption implies that the coordinates are pairwise uncorrelated in the Pearson sense (and also in the Spearman sense, see Section \ref{sec:spearman-def}). Note, however, that no moment assumption on the variables $x_{t,n}$ is required for our main theorem.

\subsection{Spearman Correlation Matrix}\la{sec:spearman-def}

For each coordinate $1\leq n\leq N$ and each observation $1\leq t\leq T$, let 
$$
Q_{n,t}
=
\sum_{s=1}^T \mathbf 1_{\{x_{s,n}\leq x_{t,n}\}}
$$
be the rank of $x_{t,n}$ among $x_{1,n},\ldots,x_{T,n}$. Since the common marginal distribution function $F$ is continuous, there are almost surely no ties. The sample Spearman rank correlation matrix is defined by
\be
R_N=ZZ^\top,
\qquad
Z_{n,t}
=
\frac{Q_{n,t}-\frac{T+1}{2}}
{\sqrt{\frac{T(T^2-1)}{12}}}.
\la{eq:sample-spearman-matrix}
\ee
The $(k,l)$-entry of $R_N$ is the usual sample Spearman rank correlation coefficient between the $k$-th and $l$-th coordinates, and the diagonal entries of $R_N$ are equal to one. Our goal is to determine the limiting spectral distribution of $R_N$ under the scale-mixture model \eqref{eq:scale-mixture-model}.

\brem\la{rmk:condi_spearman_moments}
It is useful to compare $R_N$ with its population counterpart. Define the standardized population rank transform $g(x)=\sqrt{3}\bigl(2F(x)-1\bigr), \, x\in\R.$ Since $F$ is continuous, $F(x_{t,n})$ is uniformly distributed on $(0,1)$, so $\E[g(x_{t,n})]=0$ and $\E[g(x_{t,n})^2]=1.$
The population Spearman correlation matrix is then
$$
\Sigma_N^{\mathrm S}
=
\left(
\E\!\left[g(x_{t,n})g(x_{t,m})\right]
\right)_{1\leq n,m\leq N}.
$$
In the scale-mixture model, this matrix is the identity. Indeed, $x_{t,n}$ is symmetric about 0 and by continuity of $F,$ we have $F(-x)=1-F(x)$ for any $x$ and therefore $g$ is odd. Hence, for $n\neq m$,
$$
\begin{aligned}
\E\!\left[g(x_{t,n})g(x_{t,m})\mid \sigma_t\right]
&=
\E\!\left[g(\sigma_t\xi_{t,n})\mid\sigma_t\right]
\E\!\left[g(\sigma_t\xi_{t,m})\mid\sigma_t\right] =0,
\end{aligned}
$$
where we used the conditional independence of $x_{t,n}$ and $x_{t,m}$ given $\sigma_t$. This yields $\Sigma_N^{\mathrm S}=I_N.$
\erem

\brem
Since Spearman correlations depend only on ranks, they are invariant under coordinatewise strictly increasing transformations. More precisely, if $h_1,\ldots,h_N:\mathbb R\to\mathbb R$ are strictly increasing and $y_{t,n}=h_n(x_{t,n})=h_n(\sigma_t\xi_{t,n}),$ then the Spearman matrices computed from $y$ and $x$ coincide, $R_N(y)=R_N(x).$ Thus the limiting spectral distribution obtained in Theorem \ref{thm:main} also applies to this transformed scale-mixture model. In particular, when the latent vector $x_t=\sigma_t\xi_t$ is elliptical, as in the Gaussian scale-mixture case, the transformed observations $y_t$ belong to the transelliptical family \cite{liu2012transelliptical,han2014scale}.
\erem

\subsection{Main Theorem}

We now state the LSD of the Spearman matrix $R_N$ defined in \eqref{eq:sample-spearman-matrix} under the scale-mixture model \eqref{eq:scale-mixture-model} and introduce some notation. We denote $\C^+ = \{z \in \C, \Im z > 0\}$ and $\C^- = \{z \in \C, \Im z < 0\}$. For a probability measure $\mu$ on $\R$, we denote its Stieltjes transform $m_\mu(z) = \int \!\frac{1}{z-x}d\mu$ for $z\in\C^+.$ For an $N\times N$ Hermitian matrix $A$ with eigenvalues $\lambda_1\geq\ldots\geq\lambda_N$, we denote its empirical spectral distribution by
$$
F^A=\frac{1}{N}\sum_{i=1}^N \delta_{\lambda_i}.
$$
For $a>0$, define the effective rank variance $s(a)=\E\!\left[g(a\xi)^2\right]\!.$ Let $\mu_s$ denote the law of $s(\sigma)$, i.e., 
$$
\mu_s = \operatorname{Law}(s(\sigma)),
$$
which is supported on $[0,3],$ since $|g|\leq \sqrt{3}$.

\bTh
\la{thm:main}
Suppose that {\rm (i)--(iv)} hold. Assume moreover that $\frac{N}{T}\rightarrow q\in(0,\infty)$ and that $s(\sigma)>0$ almost surely. Then the empirical spectral distribution of the sample Spearman rank correlation matrix $R_N$ converges weakly almost surely to a deterministic probability measure $F_{q,\mu_s}$ on $[0,\infty)$. Moreover, its Stieltjes transform $m(\cdot)$ is the unique function $m:\C^+\to \C^-$ satisfying
\be
z =
\frac{1}{m(z)}
+
\int_{[0,3]}
\frac{x}{1-qxm(z)}\,d\mu_s(x),
\qquad z\in\C^+.
\la{eq:main-fixed-point}
\ee
\eTh

\brem
The probability measure $F_{q,\mu_s}$ is a generalized Marčenko--Pastur (MP) law. When $\sigma$ is deterministic it follows that $\mu_s=\delta_1$ so that \eqref{eq:main-fixed-point} reduces to the classical MP equation. In this case, the LSD of the Spearman matrix is the MP law, as first established in \cite{bai2008large}. 
\erem

\brem
Recall that the $R$-transform of a compactly supported probability measure
$\mu$ is defined by
$$
R_\mu(w)=K_\mu(w)-\frac{1}{w},
$$
where $K_\mu$ is the functional inverse of its Stieltjes transform
$m_\mu$ near $w=0$. Hence, setting $w=m_\mu(z)$ in
\eqref{eq:main-fixed-point}, we obtain
$$
K_{F_{q,\mu_s}}\!(w)=
\frac{1}{w}
+
\int_{[0,3]}
\frac{x}{1-qxw}\,d\mu_s(x).
$$
Consequently, the $R$-transform of $F_{q,\mu_s}$ is
\be
R_{F_{q,\mu_s}}\!(w)
=
\int_{[0,3]}
\frac{x}{1-qxw}\,d\mu_s(x) = \E\!\left[\frac{s(\sigma)}{1-qws(\sigma)}\right],
\la{eq:r_transform}
\ee
for $w$ in a neighborhood of $0$. In particular, when $\mu_s=\delta_1$, this
gives
$$
R_{F_{q,\delta_1}}\!(w)=\frac{1}{1-qw},
$$
which is the $R$-transform of the classical Marčenko--Pastur law with
aspect ratio $q$.
\erem

\begin{cor}\la{coro:moment-proof}
Let $F_{q,\mu_s}$ be the LSD in Theorem \ref{thm:main}. Then $F_{q,\mu_s}$ is compactly supported and has an atom at zero of mass $\left(1-\frac1q\right)_+$. Its first two moments are
$$
    \int x\,dF_{q,\mu_s}(x)=1 \qquad \text{and} \qquad
    \int x^2\,dF_{q,\mu_s}(x)
    =
    1+q+q\operatorname{Var}(s(\sigma)).
$$
Moreover, the free cumulants are given by $\kappa_k = q^{k-1}\E[s(\sigma)^k]$ for $k\geq 1$.
\end{cor}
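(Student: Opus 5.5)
The plan is to read every claim off the $R$-transform \eqref{eq:r_transform} together with the fixed-point equation \eqref{eq:main-fixed-point}.

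\textbf{Free cumulants.} Since $s(\sigma)\in[0,3]$, for $|w|<1/(3q)$ the integrand expands as a uniformly convergent geometric series:
$$
R_{F_{q,\mu_s}}(w)=\sum_{k\ge0} q^k w^k\,\E[s(\sigma)^{k+1}].
$$
With the convention $R_\mu(w)=\sum_{k\ge1}\kappa_k w^{k-1}$, matching coefficients gives $\kappa_k=q^{k-1}\E[s(\sigma)^k]$.

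\textbf{First two moments.} These follow from the moment--cumulant relations $m_1=\kappa_1$ and $m_2=\kappa_2+\kappa_1^2$. Using $\E[s(\sigma)]=\E[g(\sigma\xi)^2]=\E[g(x_{t,n})^2]=1$, as noted in Remark \ref{rmk:condi_spearman_moments}, we get $m_1=1$ and
$$
m_2=q\,\E[s(\sigma)^2]+1=1+q+q\Var(s(\sigma)).
$$
To justify treating the $\kappa_k$ as the free cumulants of $F_{q,\mu_s}$, I would first establish compact support. Then $m(z)$ is analytic near $\infty$ with $m(z)=\sum_k m_k z^{-k-1}$, and the $R$-transform identity in a neighbourhood of $0$ is legitimate, as stated in the remark. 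Alternatively, both moments can be obtained by expanding \eqref{eq:main-fixed-point} in powers of $1/z$.

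\textbf{Compact support.} There are two routes, and I expect this step to be the main obstacle.
\begin{itemize}
\item \emph{Direct route.} The free cumulants satisfy $|\kappa_k|\le q^{k-1}3^k$. The standard bound via noncrossing partitions, $|m_k|\le \sum_{\pi\in NC(k)}\prod|\kappa_{|V|}|\le C_k\,(3\max(q,1))^k\cdot\mathrm{const}$ with $C_k\le 4^k$, then gives exponential growth of the moments. Such growth determines a compactly supported measure, namely the one with this $R$-transform.
\item \emph{Matrix route.} Argue at the level of $R_N$: $\|R_N\|\le \|Z\|^2$, and one expects a bounded operator norm. However, this would require a separate argument, so I would prefer the direct route.
\end{itemize}
In the direct route, one must make sure that the measure produced by Theorem \ref{thm:main} coincides with the one whose moments are $m_k$. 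To do this, I would verify the claim analytically. The solution $m$ of \eqref{eq:main-fixed-point} extends analytically to $|z|>K$ by inverting $K(w)=1/w+\int x/(1-qxw)\,d\mu_s$ near $w=0$, where $K$ is analytic with a simple pole. By uniqueness it agrees with the Stieltjes transform there. A Stieltjes transform analytic outside a disk, real on the real axis there, implies by Stieltjes inversion that the measure puts no mass on $|x|>K$.

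\textbf{Atom at zero.} Since $F_{q,\mu_s}$ is supported on $[0,\infty)$, the mass at $0$ is $\lim_{y\downarrow0} -iy\,m(iy)$, up to the sign convention $m(z)=\int 1/(z-x)$, under which the mass is $\lim_{z\to0} z\,m(z)$. I would analyse \eqref{eq:main-fixed-point} as $z=iy\to0$.
\begin{itemize}
\item \emph{Case $q>1$.} Posit $m(z)\sim c/z$. Then $qxm\to\infty$ and the integral behaves like
$$
-\int \frac{x}{qxm}\,d\mu_s=-\frac{1}{qm}\,\mu_s((0,3]) = -\frac{1}{qm},
$$
using $s(\sigma)>0$ a.s. Hence $z\approx (1-1/q)/m$, giving $c=1-1/q$.
\item \emph{Case $q\le1$.} Show that $zm(z)\to0$. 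If instead $m\sim c/z$ with $c>0$, the same computation forces $c=1-1/q\le0$, a contradiction.
\end{itemize}
This yields a mass of $(1-1/q)_+$. Equivalently, and more cleanly, at the matrix level $R_N=ZZ^\top$ has rank at most $\min(N,T-1)$, so the kernel dimension is at least $N-T+1$. This gives a lower bound on the atom, with equality requiring the analytic argument above.
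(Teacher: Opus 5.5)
Your derivation of the free cumulants and of the first two moments matches the paper's: a geometric expansion of \eqref{eq:r_transform}, then $m_1=\kappa_1$ and $m_2=\kappa_2+\kappa_1^2$ with $\E[s(\sigma)]=1$. The genuine difference lies in compact support and the atom. The paper obtains both by citing Silverstein--Choi. For compact support, it shows that $x_q(m)=1/m+\int s/(1-qsm)\,d\mu_s(s)$ is strictly decreasing on a small interval $(0,\varepsilon_q)$ with $x_q(0^+)=+\infty$, so the support characterization places $(L_q,\infty)$ outside the support. The atom is read off directly from the same reference. You argue from first principles instead. Locally inverting $G=1/K$, which is analytic at $0$ with $G'(0)=1$, continues $m$ analytically, and real-valued on the real axis, to a neighbourhood of $\infty$; this is the mechanism behind that characterization near the right edge. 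You then get the atom from the behaviour of $zm(z)$ as $z\to0$ combined with $\operatorname{rank} Z\le T-1$. Your route is longer but self-contained, and it avoids translating sign and normalization conventions into the cited theorem.

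Three points need tightening.
\begin{itemize}
\item The noncrossing-partition moment bound does no work on its own, since it presupposes that the $m_k$ are the moments of $F_{q,\mu_s}$. Drop it and let the continuation argument carry compactness.
\item To identify the local inverse $\tilde m$ with $m$, use $|m(z)|\le 1/\Im z$. For $\Im z$ large, $m(z)$ then lies in the small disc where $z=K(w)$ has the unique solution $\tilde m(z)$. The identity theorem on the connected set $\C^+\cap\{|z|>R\}$ gives $m=\tilde m$ there, so you never need to check $\tilde m\in\C^-$ in order to invoke uniqueness.
\item For the atom, replace ``posit $m\sim c/z$'' by a rigorous argument, as follows. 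The limit $c=\lim_{y\downarrow0}iy\,m(iy)=F_{q,\mu_s}(\{0\})$ exists by dominated convergence. The rank bound $F^{R_N}(\{0\})\ge (N-T+1)/N$ and the Portmanteau theorem give $c\ge 1-1/q$, so for $q>1$ you have $c>0$ and hence $|m(iy)|\to\infty$. Since $-m(iy)$ lies in the closed first quadrant, $|xm/(1-qxm)|\le 1/q$ for $x\ge0$. Multiplying \eqref{eq:main-fixed-point} by $m$ and using dominated convergence with $\mu_s(\{0\})=0$ shows that $c>0$ forces $c=1-1/q$. This settles $q>1$, and for $q\le1$ it shows that $c>0$ is impossible.
\end{itemize}
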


\brem
The corollary gives the usual atom at the origin for the classical Marčenko–Pastur law: since $s(\sigma) >0$ almost surely, no additional atom is contributed by the limiting weight distribution $\mu_s$ and the only possible atom at zero is the dimensional one, of mass $\left(1 -\frac1q\right)_+$.
\erem
\brem
The corollary shows that the first moment agrees with that of the classical Marčenko--Pastur law with aspect ratio $q$, whereas the second moment is larger by $q\operatorname{Var}(s(\sigma)).$ Consequently, whenever $s(\sigma)$ is nonconstant almost surely, the mixture variable strictly increases the limiting spectral variance.
\erem
\bpr
See Appendix \ref{app:moment-proof}.
\epr

\section{Solvable Models and Numerical Illustrations}\la{sec:solvable_models}
The fixed-point equation of Theorem \ref{thm:main} is explicit once the law of the effective rank variance $s(\sigma)$ is known. In this section, we discuss solvable families for which the corresponding $R$-transform can be written in closed form, and compare the resulting limiting spectra with finite-size simulations and with the classical Marčenko--Pastur benchmark. In what follows, we denote by $\mathrm{Beta}(\gamma,\eta)$ the beta distribution on $(0,1)$ with density proportional to $x^{\gamma-1}(1-x)^{\eta-1}$ with $\gamma>0, \eta>0$.

\subsection{Binary-spin model}

We first consider a minimal solvable case in which the directional variables $(\xi_{t,n})_{t,n}$ are independent Rademacher variables. The resulting limiting law is compared with simulations and with the classical Marčenko--Pastur benchmark in Fig.\ \ref{fig:binary-spin}.

\begin{proposition}\la{prop:binary-spin}
Assume that $\xi$ is Rademacher and that $\sigma$ has a continuous distribution, is almost surely positive and independent of $\xi$.  Then,
$$
        \frac{s(\sigma)}{3} \sim \mathrm{Beta}\left(\frac12,1\right).
$$
In particular, the limiting spectral distribution in Theorem \ref{thm:main} has
$R$-transform
$$
        R_{F_{q,\mu_s}}\!(w)
        =
        \frac{1}{qw}
        \left(
        \frac{\operatorname{arctanh}\sqrt{3qw}}{\sqrt{3qw}}
        -1
        \right),
$$
for $w$ in a neighborhood of zero.
\end{proposition}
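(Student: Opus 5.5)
Our plan is to compute $s(a)$ explicitly in terms of $F$, identify the law of $s(\sigma)$ through the probability integral transform, and then evaluate the integral in \eqref{eq:r_transform} in closed form.

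\emph{Step 1: compute $s(a)$.} Since $\xi=\pm1$ with probability $1/2$ each, we have
$$s(a)=\E[g(a\xi)^2]=\tfrac12 g(a)^2+\tfrac12 g(-a)^2=g(a)^2=3\bigl(2F(a)-1\bigr)^2.$$
Here we use that $g$ is odd, as noted in Remark \ref{rmk:condi_spearman_moments}. Assumption (iv) holds because $\sigma$ is continuous, so $F$ is continuous.

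\emph{Step 2: the law of $s(\sigma)$.} The key observation is that, for $a>0$,
$$F(a)=\P(\sigma\xi\le a)=\tfrac12+\tfrac12\P(\sigma\le a)=\tfrac12\bigl(1+G(a)\bigr),$$
where $G$ is the distribution function of $\sigma$. Here we used $\sigma>0$ and the independence of $\sigma$ and $\xi$. Hence $2F(\sigma)-1=G(\sigma)$. Since $G$ is continuous, $U:=G(\sigma)$ is uniform on $(0,1)$, and therefore $s(\sigma)/3=U^2$. A one-line computation gives $\P(U^2\le y)=\sqrt y$, which is the $\mathrm{Beta}(1/2,1)$ distribution with density $\tfrac12 y^{-1/2}$ on $(0,1)$. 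Moreover $s(\sigma)>0$ almost surely, so Theorem \ref{thm:main} applies.

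\emph{Step 3: the $R$-transform.} By \eqref{eq:r_transform} we have $R(w)=\E\bigl[\frac{3U^2}{1-3qwU^2}\bigr]$. Set $c=3qw$, with $|c|<1$ for $w$ near $0$, and use
$$\frac{3U^2}{1-cU^2}=\frac{3}{c}\Bigl(\frac{1}{1-cU^2}-1\Bigr).$$
The integral $\int_0^1 \frac{du}{1-cu^2}=\frac{\operatorname{arctanh}\sqrt c}{\sqrt c}$ is the standard arctanh antiderivative. It is understood as an analytic function of $c$ near $0$ through its even power series $\sum_k c^k/(2k+1)$, so the choice of branch of $\sqrt c$ is irrelevant. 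Combining, we get $R(w)=\frac{3}{c}\bigl(\frac{\operatorname{arctanh}\sqrt c}{\sqrt c}-1\bigr)=\frac{1}{qw}\bigl(\cdots\bigr)$, as claimed.

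There is no real obstacle here. The only step needing care is Step 2, namely the identity $2F(a)-1=G(a)$ that turns $s(\sigma)$ into the square of a uniform variable. A minor point is justifying analyticity near $w=0$, which follows from the boundedness of $U$.
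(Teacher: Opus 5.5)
Your proof is correct and follows the same route as the paper: you show $2F(a)-1=F_\sigma(a)$ for $a>0$, so $s(\sigma)=3F_\sigma(\sigma)^2$ with $F_\sigma(\sigma)$ uniform, and you then evaluate \eqref{eq:r_transform}. You also spell out the elementary integral $\int_0^1(1-cu^2)^{-1}\,du=\operatorname{arctanh}(\sqrt{c})/\sqrt{c}$ and check assumption (iv) and $s(\sigma)>0$, which the paper leaves implicit.
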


\brem
Note that under the binary-spin model the distribution of $s(\sigma)/3$ is universal in the sense that it does not depend on the distribution of the mixture variable $\sigma$.
\erem

\bpr
See Appendix \ref{app:proof-binary-spin}
\epr

\begin{figure}[h!]
    \centering
    \includegraphics[width=0.9\textwidth]{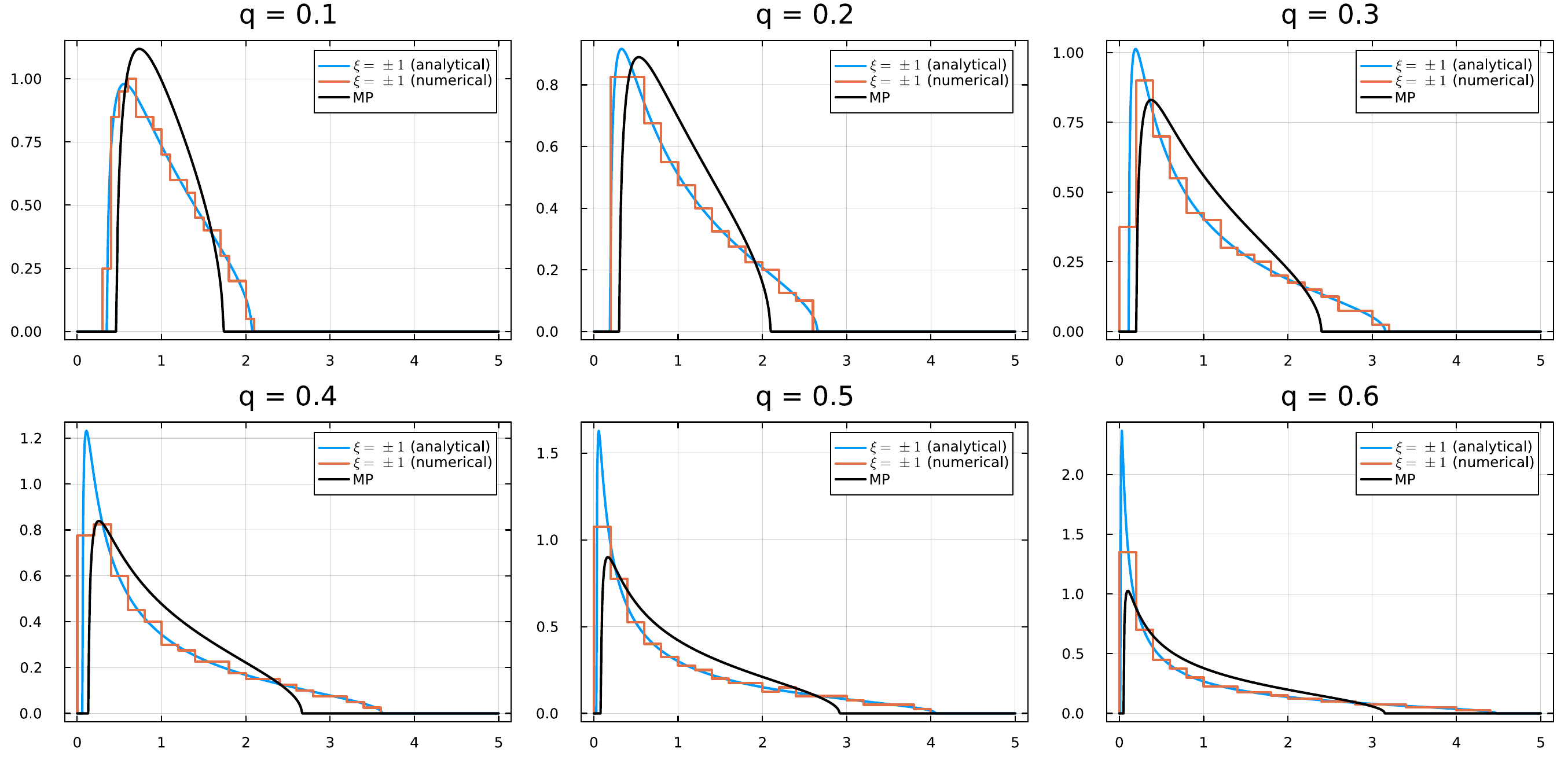}
    \caption{Binary-spin model. The orange histograms show the empirical eigenvalue distributions of $R_N$ obtained from simulations with $N=200$ and several values of the aspect ratio $q=N/T$. The blue curves show the analytical limiting density derived from the explicit $R$-transform of Proposition \ref{prop:binary-spin}, and the black curves show the classical Marčenko--Pastur density with aspect ratio $q$.}
    \label{fig:binary-spin}
\end{figure}

\FloatBarrier
\subsection{Beta-copula and one-cut approximations with application to Student data}
\label{subsec:beta-copulas}

\subsubsection{Beta-copula approximation}
The binary-spin model gives a solvable example in which the effective rank variance has a beta distribution. This motivates the following more general family of benchmarks. We henceforth say that the model has a beta-copula with parameter $\alpha>0$ if the effective rank variance $s(\sigma)$ satisfies
$$
        \frac{s(\sigma)}{3}\sim \operatorname{Beta}(\alpha,2\alpha).
$$
The parameters are chosen so that $\mathbb E[s(\sigma)]=1$, consistent with the identity
$\mathbb E[s(\sigma)]=\mathbb E[g(\sigma\xi)^2]=1$.

\begin{proposition}
\label{prop:beta-copula}
Assume that
$$
        \frac{s(\sigma)}{3}\sim \operatorname{Beta}(\alpha,2\alpha),
        \qquad \alpha>0.
$$
Then the limiting spectral distribution of Theorem \ref{thm:main} has
$R$-transform
$$
R_{F_{q,\mu_s}}\!(w)={}_2F_1\!\left(1,\alpha+1;3\alpha+1;3qw\right),
$$
for $w$ in a neighborhood of zero, where ${}_2F_1$ denotes the Gauss hypergeometric function.
\end{proposition}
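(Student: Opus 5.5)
We plan to start from the $R$-transform formula \eqref{eq:r_transform}, which holds for $w$ near $0$:
$$
R_{F_{q,\mu_s}}(w)=\E\!\left[\frac{s(\sigma)}{1-qws(\sigma)}\right].
$$
Writing $s(\sigma)=3B$ with $B\sim\mathrm{Beta}(\alpha,2\alpha)$, we get $R(w)=\E\bigl[3B/(1-3qwB)\bigr]$. Since $0\le B\le 1$, for $|3qw|<1$ the geometric series $\sum_{k\ge0}(3qwB)^k$ converges uniformly in $B$. We may therefore expand and integrate term by term to obtain
$$
R(w)=\sum_{k\ge0} 3^{k+1}q^k w^k\,\E[B^{k+1}].
$$

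The next step is to evaluate the Beta moments. Using the standard formula
$$
\E[B^{k+1}]=\frac{(\alpha)_{k+1}}{(3\alpha)_{k+1}}=\frac{\alpha}{3\alpha}\cdot\frac{(\alpha+1)_k}{(3\alpha+1)_k},
$$
the prefactor is $\E[B]=1/3$, so that $3\E[B^{k+1}]=(\alpha+1)_k/(3\alpha+1)_k$. Substituting, we find
$$
R(w)=\sum_{k\ge0}\frac{(1)_k(\alpha+1)_k}{(3\alpha+1)_k}\frac{(3qw)^k}{k!},
$$
where we used $(1)_k=k!$ to insert the factor $(1)_k/k!=1$. This is exactly the series defining ${}_2F_1(1,\alpha+1;3\alpha+1;3qw)$, and it converges for $|3qw|<1$, which gives the neighborhood of zero claimed in the statement.

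An alternative route is to use Euler's integral representation
$$
{}_2F_1(a,b;c;x)=\frac{\Gamma(c)}{\Gamma(b)\Gamma(c-b)}\int_0^1 t^{b-1}(1-t)^{c-b-1}(1-xt)^{-a}\,dt,
$$
with $a=1$, $b=\alpha+1$, $c=3\alpha+1$. Here the factor $t\cdot t^{\alpha-1}$ absorbs the extra $B$, and the normalizing constants match because $B(\alpha+1,2\alpha)/B(\alpha,2\alpha)=1/3$.

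The argument is routine; the main point needing care is justifying the interchange of expectation and series. This follows from the uniform bound $|3qwB|\le 3q|w|<1$ together with dominated convergence. The statement also only requires that \eqref{eq:r_transform} hold on the same neighborhood, which the earlier remark provides.
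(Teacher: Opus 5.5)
Your proposal is correct, and the alternative route you sketch is exactly the paper's proof: Euler's integral representation with $a=1$, $b=\alpha+1$, $c=3\alpha+1$, combined with $B(\alpha+1,2\alpha)/B(\alpha,2\alpha)=1/3$. Your primary series argument is that same identity unpacked term by term through the Beta moments $3\E[B^{k+1}]=(\alpha+1)_k/(3\alpha+1)_k$, which also matches the free cumulants $\kappa_k=q^{k-1}\E[s(\sigma)^k]$ of Corollary~\ref{coro:moment-proof}, and your dominated-convergence justification for $|3qw|<1$ is sound.
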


\bpr
See Appendix \ref{app:proof-beta-copula}
\epr

We now use the beta-copula family as a tractable approximation for the spectral law obtained when the observations $x$ follow a multivariate Student distribution. This is a natural class of examples in the present scale-mixture framework. Indeed, Student distributions are canonical elliptical distributions and admit the standard Gaussian scale-mixture representation: given $\Sigma \!\in\! \R^{N\times N}$ and $\nu\!>\!0,$ if $\xi\sim \mathcal N(0,\Sigma)$ and, independently, $\sigma^2 \sim \operatorname{InvGamma}\left(\frac{\nu}{2},\frac{\nu}{2}\right)\!,$ then $x=\sigma \xi$ has a multivariate Student distribution with $\nu$ degrees of freedom and scale matrix $\Sigma$. More generally, elliptical distributions are closely related to scale mixtures of normal distributions; see, for instance, \cite{gomez2006sequences}. Beyond this structural connection, Student-type models are also natural from an applied perspective, as they provide a simple way to model heavy-tailed data arising in robust multivariate statistics, econometrics, and finance. In the numerical experiments below, we focus on the identity-scale case, which belongs to the i.i.d.\ directional framework of Theorem \ref{thm:main}; correlated Gaussian directions are discussed in Section \ref{sec:gene_model}.

To obtain a tractable approximation in this setting, we proceed as follows. For different values of $\nu$, we simulate Student observations with identity scale matrix and estimate the corresponding distribution of the effective rank variance $s(\sigma).$ We then fit a one-parameter beta distribution of type $\textrm{Beta}(\alpha, 2\alpha)$ to $s(\sigma)/3$ by maximum likelihood, and denote the resulting estimate by $\widehat{\alpha}_{\mathrm{MLE}}(\nu)$. The R-transform of the LSD is then deduced by Proposition \ref{prop:beta-copula}. Fig.\ \ref{fig:student} shows that the agreement between the empirical spectrum and the fitted analytical density is very good, thereby supporting the beta-copula approximation. We also observe that as the number of degrees of freedom $\nu$ increases, the classical Marčenko--Pastur law becomes progressively closer to the empirical spectrum. This is expected: as $\nu\to\infty,$ the Student distribution converges to the Gaussian distribution with identity scale, whose coordinates are independent, and the limiting spectral distribution reduces to the classical Marčenko--Pastur law.

\begin{figure}[h!]
    \centering
    \includegraphics[width=0.9\textwidth]{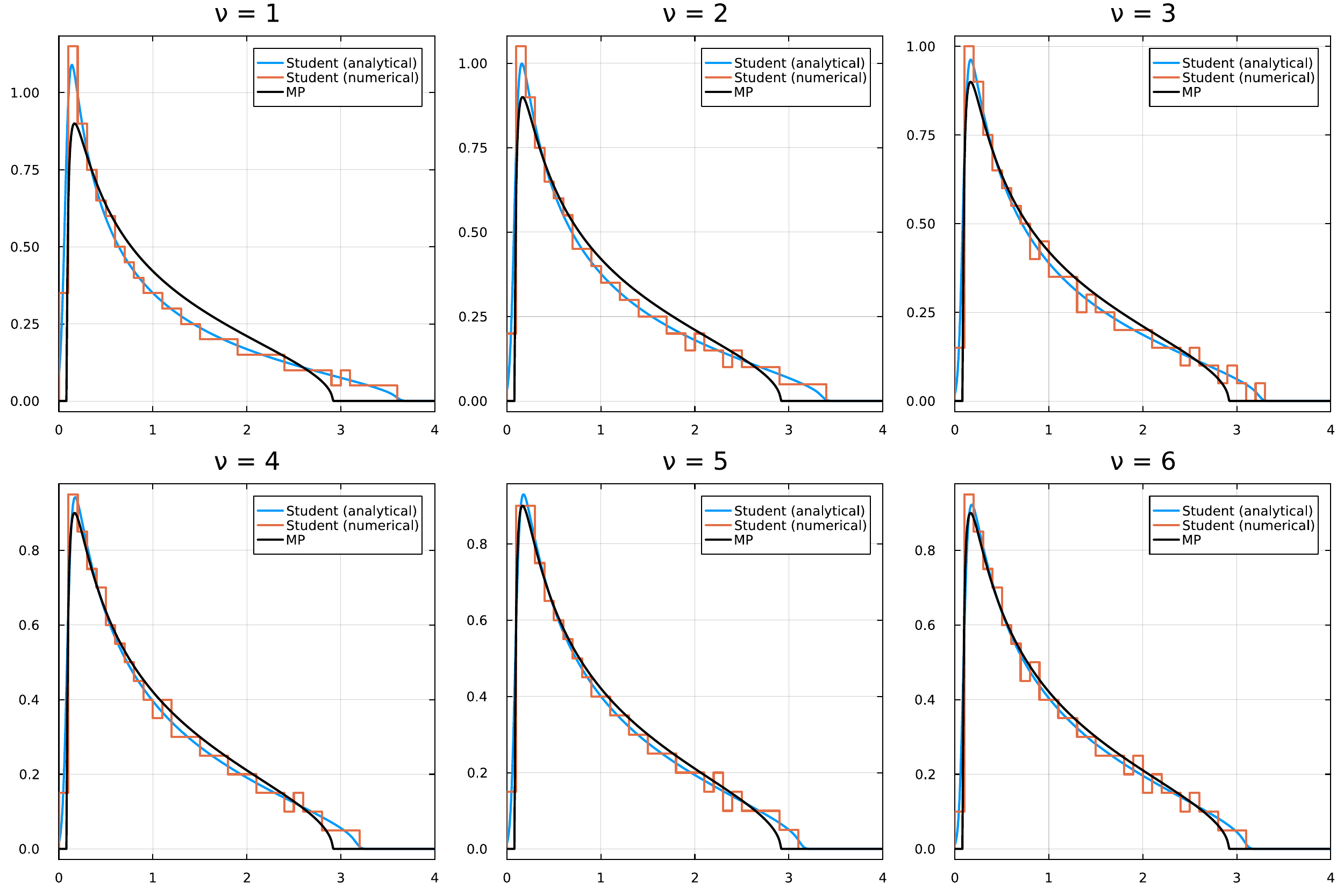}
    \caption{Student data and beta-copula approximation. Empirical results for multivariate Student observations with $N=200$, $q=0.5$ and degrees of freedom $\nu=1,\ldots,6$. The orange histograms show the empirical eigenvalue distributions of $R_N$ obtained from simulations, the blue curves show the analytical densities obtained by fitting a beta-copula law $\operatorname{Beta}(\alpha,2\alpha)$ to the simulated distribution of $s(\sigma)/3$, and the black curves show the classical Marčenko--Pastur density with aspect ratio $q$.}
    \label{fig:student}
\end{figure}

To further describe the beta-copula approximation, Figure \ref{fig:alpha-mle} displays the fitted parameter $\widehat{\alpha}_{\mathrm{MLE}}$ as a function of the Student degrees of freedom $\nu$. Over the range considered, the fitted parameter appears to grow linearly with $\nu$. This increase is consistent with the progressive concentration of the distribution of $s(\sigma)/3$ around $1/3$, since $\mathrm{Beta}(\alpha,2\alpha)$ converges to a point mass at $1/3$ as $\alpha\to\infty$.

\begin{figure}[t]
    \centering
    \includegraphics[width=0.45\textwidth]{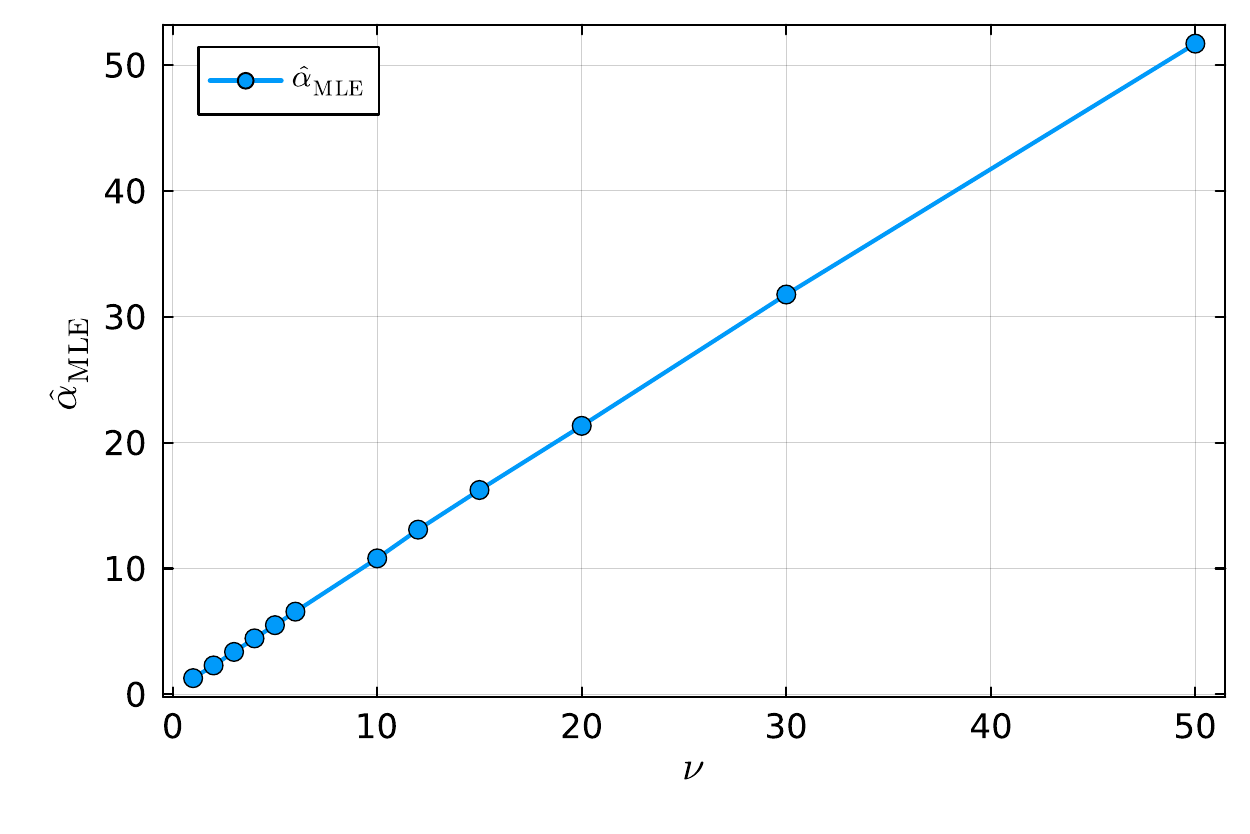}
    \caption{Fitted beta parameter
    $\widehat{\alpha}_{\mathrm{MLE}}$ as a function of the Student degrees
    of freedom $\nu$. For each $\nu$, the parameter is obtained by
    maximum-likelihood fitting of a $\mathrm{Beta}(\alpha,2\alpha)$ law to
    the simulated normalized effective rank variance $s(\sigma)/3$.}
    \label{fig:alpha-mle}
\end{figure}

\brem
Proposition \ref{prop:binary-spin} provides one exact beta-copula example, corresponding to $\alpha=1/2$. A natural open problem, and a possible direction for future work, is to determine whether there exists a broader family of scale-mixture models for which the normalized effective rank variance follows exactly a beta distribution with parameters $\alpha$ and $2\alpha$, for a nontrivial range of values of $\alpha$.
\erem

\FloatBarrier
\subsubsection{One-cut approximation}
Another natural solvable class is obtained by assuming that the density of $s(\sigma)/3$ has the more general one-cut form $\rho(x)=\sqrt{x(1-x)}\,Q(x)\mathbf 1_{[0,1]}(x),$ where $Q$ is a polynomial. In that case, the Stieltjes transform of $\rho$ is explicit according to Proposition \ref{propo:one_cut}. Moreover, it follows from \eqref{eq:r_transform} that $R_{F_{q, \mu_s}}\!(w)=3z^2m_\rho(z) - 3z$ where $z=(3qw)^{-1}$.

\begin{propo}\la{propo:one_cut}
Let $a<b$, let $Q(x)=\sum_{k=0}^D q_k x^k$ be a non-zero polynomial of degree $D\geq0$, and let
$$
\rho(x)
= Q(x)\sqrt{(x-a)(b-x)}\,\mathbf 1_{[a,b]}(x)
$$
be a probability density. Let $\Delta(z)=\sqrt{(z-a)(z-b)}$ be the branch such that $\Delta(z)\sim z$ as $z\to\infty$. Then, for $z\in \C\setminus [a,b],$
$$
m_\rho(z)
=
\pi Q(z)\left(z-\frac{a+b}{2}-\Delta(z)\right) - \Pi_Q(z),
$$
where $\Pi_Q$ is the polynomial
$$
\Pi_Q(z)
=
\sum_{k=1}^D q_k
\sum_{\ell=0}^{k-1}
z^{k-1-\ell}M_\ell,
\qquad
M_\ell
=
\int_a^b x^\ell\sqrt{(x-a)(b-x)}\,dx.
$$
\end{propo}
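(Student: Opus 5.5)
The plan is a direct computation from the definition $m_\rho(z)=\int_a^b \frac{\rho(x)}{z-x}\,dx$, reducing everything to the Stieltjes transform of the bare semicircle-type weight $w(x)=\sqrt{(x-a)(b-x)}\mathbf 1_{[a,b]}(x)$. The key algebraic step is the polynomial division
$$
\frac{Q(x)}{z-x}=\frac{Q(z)}{z-x}-\frac{Q(z)-Q(x)}{z-x},
\qquad
\frac{Q(z)-Q(x)}{z-x}=\sum_{k=1}^D q_k\sum_{\ell=0}^{k-1} z^{k-1-\ell}x^{\ell}.
$$
Multiplying by $w(x)$ and integrating over $[a,b]$ gives
$$
m_\rho(z)=Q(z)\,m_w(z)-\sum_{k=1}^D q_k\sum_{\ell=0}^{k-1}z^{k-1-\ell}M_\ell=Q(z)\,m_w(z)-\Pi_Q(z),
$$
where $m_w(z)=\int_a^b\frac{w(x)}{z-x}\,dx$ is the (unnormalized) transform of $w$. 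Interchanging the finite sum with the integral is harmless, since $w$ is bounded and compactly supported and $z\notin[a,b]$.

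It remains to show that $m_w(z)=\pi\bigl(z-\tfrac{a+b}{2}-\Delta(z)\bigr)$ on $\C\setminus[a,b]$. I would reduce to the standard semicircle by the affine change $x=c+r u$, with $c=\frac{a+b}{2}$ and $r=\frac{b-a}{2}$. Then $w(x)\,dx=r^2\sqrt{1-u^2}\,du$, and
$$
m_w(z)=r\int_{-1}^1\frac{\sqrt{1-u^2}}{\zeta-u}\,du,\qquad \zeta=\frac{z-c}{r}.
$$
The classical identity $\int_{-1}^1\frac{\sqrt{1-u^2}}{\zeta-u}\,du=\pi\bigl(\zeta-\sqrt{\zeta^2-1}\bigr)$ holds with the branch $\sqrt{\zeta^2-1}\sim\zeta$ at infinity. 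Substituting back and noting $r\sqrt{\zeta^2-1}=\Delta(z)$ for the matching branch gives the claim.

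To justify the semicircle identity itself, I would avoid contour bookkeeping and argue by uniqueness. Both sides are analytic on $\C\setminus[-1,1]$ and behave like $\frac{\pi}{2\zeta}+O(\zeta^{-2})$ at infinity; the mass is $\int\sqrt{1-u^2}\,du=\pi/2$, and expanding the right side gives $\pi\bigl(\tfrac{1}{2\zeta}+\dots\bigr)$. Their boundary values on $(-1,1)$ have the same jump in imaginary part, namely $\mp i\pi\sqrt{1-u^2}$ by Sokhotski–Plemelj. Hence the difference extends to an entire function that vanishes at infinity, and by Liouville it is identically zero. Alternatively, one can match the Laurent coefficients, which are the moments, the Catalan-type numbers.

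The only delicate point is the branch choice. I need to check that $\Delta(z)=r\sqrt{\zeta^2-1}$ is exactly the branch of $\sqrt{(z-a)(z-b)}$ that is analytic off $[a,b]$ and satisfies $\Delta(z)\sim z$. This is immediate, since both are analytic off the cut, square to the same function, and have the same sign at infinity. With that check done, the formula follows by combining the pieces. Note that the normalization of $\rho$ as a probability density is not needed for the identity; it only ensures $m_\rho(z)\sim 1/z$, which is consistent with the top-order cancellation between $\pi Q(z)(\dots)$ and $\Pi_Q(z)$.
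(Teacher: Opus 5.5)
Your proof is correct, and it is more self-contained than the paper's. The paper gives no derivation and only appeals to the standard one-cut structure of log-gas equilibrium densities \cite[Chap.~5]{potters2020first}.

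In that viewpoint, one notes that $m_\rho(z)+\pi Q(z)\Delta(z)$ has no jump across $(a,b)$: the jump $-2\pi i\rho$ of $m_\rho$ is cancelled by that of $\pi Q\Delta$. This function also stays bounded near $a$ and $b$, so it is an entire function of polynomial growth, i.e.\ a polynomial. Since $m_\rho(z)\to 0$, it must be the polynomial part of the expansion of $\pi Q(z)\Delta(z)$ at infinity. Matching it with $\pi Q(z)\bigl(z-\frac{a+b}{2}\bigr)-\Pi_Q(z)$ then still requires expanding $\Delta$ at infinity and recognizing its coefficients as the moments $M_\ell$.

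Your polynomial division does this bookkeeping automatically. It reduces every $Q$ to the single weight $Q\equiv 1$, so the analytic input is needed only for the semicircle, and $\Pi_Q$ comes out directly in the stated moment form. The one-cut argument, on the other hand, treats all $Q$ at once. It is also the form that is useful in the inverse direction, when $a$, $b$ and $Q$ must be read off from an equilibrium equation.

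One minor point in your Liouville step. You should state explicitly that both sides are bounded near $\pm 1$, so the endpoints are removable singularities. You should also note that the Plemelj boundary values are continuous because $\sqrt{1-u^2}$ is H\"older. Your alternative avoids these details entirely: match Laurent coefficients on $|\zeta|>1$ (the Catalan moments), then invoke the identity theorem on the connected set $\C\setminus[-1,1]$.
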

\begin{proof}
This is a standard one-cut structure that appears in the computation of log-gas equilibrium densities; see, for instance, \cite[Chap.\ 5]{potters2020first}.
\end{proof}

Compared with the one-parameter beta approximation, this class has additional degrees of freedom through the polynomial $Q$. Indeed, the beta approximation $\mathrm{Beta}(\alpha,2\alpha)$ has a prescribed edge behaviour: its density is proportional to $x^{\alpha-1}$ as $x\downarrow 0$ and to $(1-x)^{2\alpha-1}$ as $x\uparrow 1$. By contrast, $Q$ provides additional edge flexibility: if it has zeros of multiplicities $m_0,m_1\in\mathbb{N}$ at $0$ and $1$, respectively, then
$$
\rho(x)\asymp x^{m_0+1/2}
\quad\text{as }x\downarrow 0,
\qquad
\rho(x)\asymp (1-x)^{m_1+1/2}
\quad\text{as }x\uparrow 1.
$$
We therefore consider $Q(x)=x^{m_0}(1-x)^{m_1}P(x),$ where the multiplicities $(m_0,m_1)$ are fitted together with the coefficients of the remaining polynomial factor $P,$ with $P(0)P(1)\ne 0$. The choice of $P$ is guided by the observed shape of the empirical density of $s(\sigma)/3$. In our numerical experiments with Student data, Fig.\ \ref{fig:fit_beta} shows that this density concentrates around $1/3$, so we use a polynomial correction centered at $1/3$, namely
$$
P(x)
=
\beta_0 \left(
1+\sum_{k=1}^K \beta_k\left(x-\frac13\right)^k
\right)^2,
$$
where the coefficients $(\beta_k)_{0\le k \le K}$ are given and $K\ge0.$ The square guarantees nonnegativity of the fitted density, while the coefficients $(\beta_k)_{1\le k \le K}$ provide flexibility around the concentration point. This gives the parametric family
$$
\rho(x)
=
\beta_0 \, x^{m_0+1/2}(1-x)^{m_1+1/2}
\left(
1+\sum_{k=1}^K \beta_k\left(x-\frac13\right)^k
\right)^2
\mathbf 1_{[0,1]}(x),
$$
which remains explicitly solvable through Proposition \ref{propo:one_cut}. Fig.\ \ref{fig:fit_beta} shows that the $\mathrm{Beta}(\alpha,2\alpha)$ approximation is already reasonably accurate across all values of $\nu$, and becomes very good for larger degrees of freedom, especially for $\nu\geq 4$, as the density of $s(\sigma)/3$ concentrates around $1/3$. The polynomial one-cut approximation provides an accurate fit across the whole range of degrees of freedom. Its advantage is particularly visible for small $\nu$, where the beta approximation has more difficulty capturing the sharp mode and the asymmetric tail behavior. In practice, since the empirical spectral distribution obtained with the beta-copula approximation already provides a very good fit, as observed in Fig.\ \ref{fig:student}, the additional model complexity of the one-cut fit may not always be worth the computational cost.

\begin{figure}[h!]
    \centering
    \includegraphics[width=0.9\textwidth]{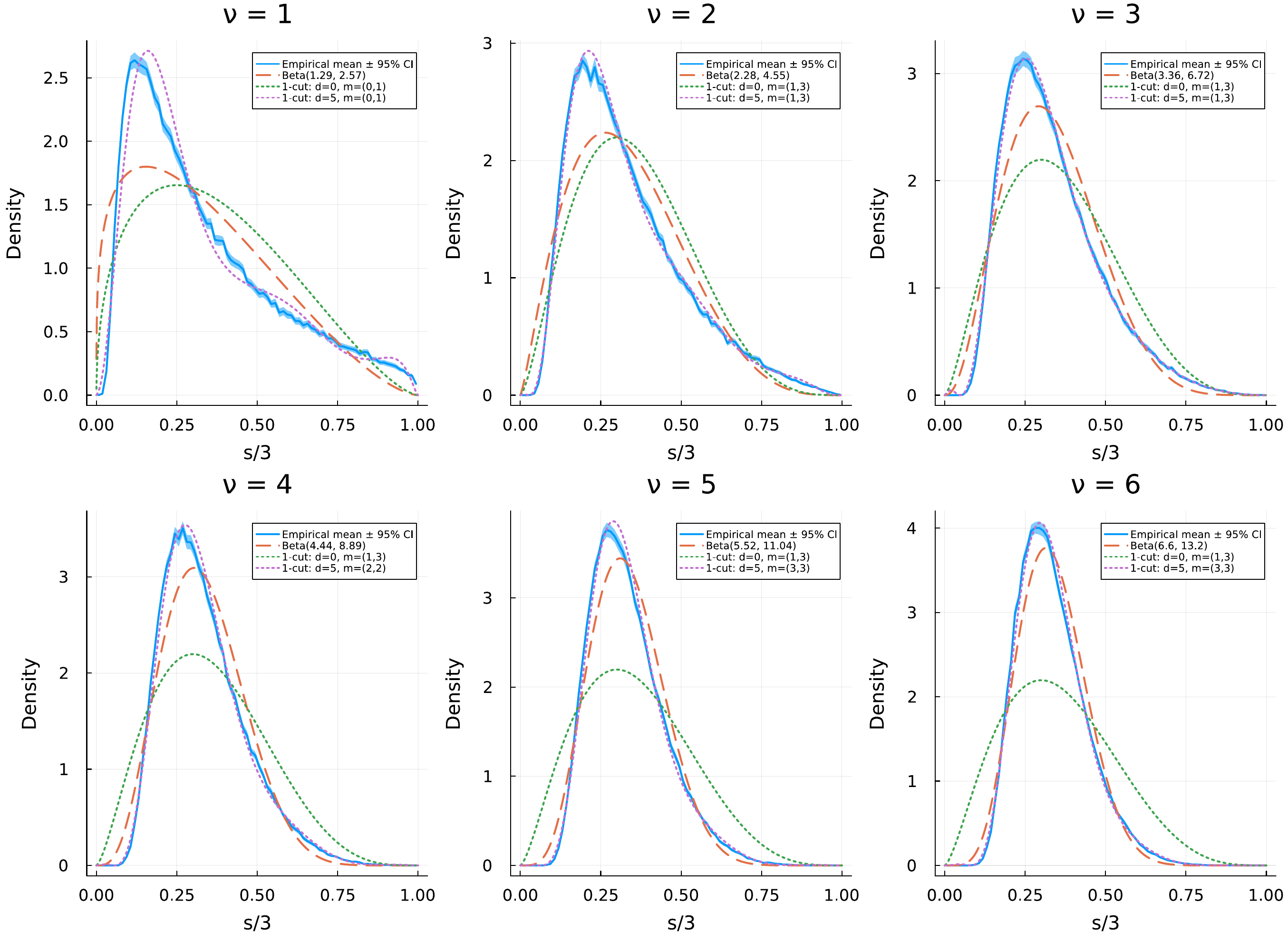}
    \caption{Fit of the empirical density of $s(\sigma)/3$ for Student data with degrees of freedom $\nu=1,\ldots,6$. The solid blue curve is the average empirical density over 100 simulations, with the shaded region corresponding to a 95\% confidence interval. The dashed orange curve is the fitted $\mathrm{Beta}(\alpha,2\alpha)$ approximation. The dotted curves correspond to the polynomial one-cut approximations for $K\in\{0,5\}$ and multiplicities $m=(m_0, m_1)$.}
    \label{fig:fit_beta}
\end{figure}

\FloatBarrier
\section{Latent-Variable Dependence Model}\la{sec:gene_model}
We now show that the mechanism behind Theorem \ref{thm:main} is not specific to pure scale mixtures. Although the scale-mixture model remains the main model of interest, it is useful to formulate the argument in a broader latent-variable framework, where the same spectral mechanism appears in a more abstract form. In this setting, the dependence between coordinates is generated by a common latent variable $L_t$. The scale-mixture model of Section \ref{sec:model-result} is recovered as a special case, while the generalized framework also covers models with correlated directional components, such as one-factor Gaussian scale mixtures.

\subsection{Model and limiting spectral distribution}
Let $m\ge 1$ and $H:\R^m\times\R\to\R$ be a measurable function. For each $T \geq 1$, let $N=N(T)$ and assume that:
\begin{enumerate}
    \item[(a)] $(L_t)_{1\leq t\leq T}$ are i.i.d.\ copies of a multivariate random variable $L\in\R^m$, 
    \item[(b)] $(\eps_{t,n})_{1\leq t\leq T,\, 1\leq n\leq N}$ are i.i.d.\ copies of a real random variable $\eps$,
    \item[(c)] the families $(L_t)_t$ and $(\eps_{t,n})_{t,n}$ are independent,
    \item[(d)] the distribution function $F$ of $H(L, \eps)$ is continuous. 
\end{enumerate}
For each $1\leq t \leq T$, we consider the observations \be\la{eq:generalized_model} 
x_{t,n} = H(L_t, \eps_{t,n}).
\ee
Throughout this section, we keep the notation $F$ for the common marginal distribution function of $H(L,\eps)$, and we define the associated standardized rank transform as before by $g(x)=\sqrt{3}\bigl(2F(x)-1\bigr).$ For $l\in\R^m$ consider $$\phi(l)=\E[g(H(l,\eps))] \qquad \text{and} \qquad \psi(l)=\Var[g(H(l,\eps))].$$
We denote by $\mu_\psi$ the law of $\psi(L)$, i.e., $$\mu_\psi=\operatorname{Law}(\psi(L)),$$ which is supported on $[0,3],$ since $|g|\leq \sqrt{3}$.

\bTh
\la{thm:generalized_main}
Suppose that {\rm (a)--(d)} hold. Assume moreover that $\frac{N}{T}\rightarrow q\in(0,\infty)$ and that $\psi(L)>0$ almost surely. Then the empirical spectral distribution of the sample Spearman rank correlation matrix $R_N$ converges weakly almost surely to a deterministic probability measure $F_{q,\mu_\psi}$ on $[0,\infty)$. Moreover, its Stieltjes transform $m(\cdot)$ is the unique function $m:\C^+\to \C^-$ satisfying
\be
z = \frac{1}{m(z)} + \int_{[0,3]}
\frac{x}{1-qxm(z)}\,d\mu_\psi(x),
\qquad z\in\C^+.
\ee
\eTh

\brem\la{rem:outlier_eigval}
Although Theorem \ref{thm:generalized_main} has the same limiting bulk description as Theorem \ref{thm:main}, the generalized setting contains an additional finite-rank component. More precisely, in the proof we decompose the oracle-score covariance matrix as in \eqref{eq:decomp_gene_terms}. The finite-rank terms in this decomposition, i.e., the last three terms on the right-hand side, do not affect the weak empirical spectral distribution. However, one of them may create a large isolated eigenvalue. Indeed, the term $\frac{\|\phi_T\|^2}{T}\mathsf{1}_N\mathsf{1}_N^\top$ has one nonzero eigenvalue equal to $N\frac{\|\phi_T\|^2}{T}$, which is of order $N\E[\phi(L)^2]$, where $\phi_T=(\phi(L_1),\ldots,\phi(L_T))^\top$.
\erem

\begin{cor}\la{coro:gene-moment-proof}
Let $F_{q,\mu_\psi}$ be the LSD in Theorem \ref{thm:generalized_main}. Then $F_{q,\mu_\psi}$ is compactly supported and has an atom at zero of mass $\left(1-\frac1q\right)_+$. Its first two moments are
$$
    \int x\,dF_{q,\mu_\psi}(x)=\E[\psi(L)]
$$
and
$$
    \int x^2\,dF_{q,\mu_\psi}(x)
    =
    \E[\psi(L)]^2+q\E[\psi(L)^2].
$$
Moreover, the free cumulants are given by $\kappa_k = q^{k-1}\E[\psi(L)^k]$ for $ k\geq 1.$
\end{cor}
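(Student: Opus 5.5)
The plan is to read everything off the $R$-transform, exactly as for Corollary \ref{coro:moment-proof}. Setting $w=m(z)$ in the fixed-point equation of Theorem \ref{thm:generalized_main} gives
$$
R_{F_{q,\mu_\psi}}(w)=\E\!\left[\frac{\psi(L)}{1-qw\psi(L)}\right]=\sum_{k\ge1}q^{k-1}\E[\psi(L)^k]\,w^{k-1},
$$
and the series converges for $|w|<1/(3q)$ because $0<\psi(L)\le 3$. Since $R(w)=\sum_k\kappa_k w^{k-1}$, this identifies the free cumulants as $\kappa_k=q^{k-1}\E[\psi(L)^k]$.

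The moments then follow from the moment–cumulant relations. The first moment is $m_1=\kappa_1=\E[\psi(L)]$. The second is
$$
m_2=\kappa_2+\kappa_1^2=q\E[\psi(L)^2]+\E[\psi(L)]^2 .
$$
To make the $R$-transform step rigorous, one has to know that $F_{q,\mu_\psi}$ is compactly supported, so that $m$ is analytic and invertible near $z=\infty$ with $m(z)\sim 1/z$.

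I see two routes to compact support. The first is to note that $F_{q,\mu_\psi}$ is the free multiplicative convolution of the Marčenko–Pastur law with a compactly supported law, which is the standard origin of this equation. The second, more self-contained, is to note that $\kappa_k$ grows at most like $(3q)^{k-1}\cdot 3$. Since $|\mathrm{NC}(k)|\le 4^k$, the moments then satisfy $m_k\le C^k$. A bound of this kind makes the measure determined by its moments and places its support in $[-C,C]$, with $C$ of order $4\max(3,9q)$. One then checks that the moments so defined match the expansion of the unique solution $m$ at infinity. This works because the analytic inverse $K(w)=1/w+R(w)$ yields a Stieltjes transform that solves the equation, and uniqueness in Theorem \ref{thm:generalized_main} identifies it with $m$. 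Nonnegativity of the support comes from the theorem itself.

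For the atom at zero, I would compute $\lim_{y\downarrow0} iy\, m(iy)$, which up to sign equals $F(\{0\})$. Write $u=z\,m(z)$ and multiply the equation by $m$:
$$
u = 1+\E\!\left[\frac{qm\psi}{1-qm\psi}\right]\cdot\frac1q .
$$
As $z\to0$, there are two cases. If $m$ stays bounded then $u\to0$. If $|m|\to\infty$ then, since $\psi>0$ a.s. and dominated convergence applies, $\E\!\left[\frac{\psi}{1-qm\psi}\right]\cdot m\to -1/q$, so $u\to 1-1/q$. Which branch occurs is decided by positivity: $F(\{0\})=\lim u$ must lie in $[0,1]$. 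For $q>1$ the mass is $1-1/q$, and the alternative $0$ is excluded by a rank argument: $R_N=ZZ^\top$ has rank at most $T$, so at least a fraction $1-T/N\to1-1/q$ of eigenvalues vanish, and weak convergence gives $F(\{0\})\ge 1-1/q$. For $q\le1$ I would also need to exclude an intermediate limit of $u$. This means showing that no subsequence has $m$ growing while $qm\psi$ stays near $1$ on a set of positive $\mu_\psi$-mass. Here the hypothesis $\psi(L)>0$ a.s., which rules out an atom of $\mu_\psi$ at $0$, is what I would use.

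I expect the main obstacle to be this atom analysis for $q\le1$, namely controlling the limit of $z\,m(z)$ rigorously. The cumulant and moment parts are routine.
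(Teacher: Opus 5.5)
Your proposal is correct. For the free cumulants and the first two moments it is the paper's argument: expand $R(w)=\E[\psi/(1-qw\psi)]$ and apply the moment--cumulant formula.

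It differs from the paper on the two analytic inputs. The paper (via Corollary \ref{coro:moment-proof}) gets compact support from the Silverstein--Choi characterization of the complement of the support: it shows $x_q'(m)<0$ on $(0,\varepsilon_q)$, so $(L_q,\infty)$ lies outside the support. It then simply cites Silverstein for the atom $(1-1/q)_+$. You instead obtain compact support from free probability or from the bound $|\kappa_k|\le 3^k q^{k-1}$ together with $|\mathrm{NC}(k)|\le 4^k$, and you derive the atom directly from the fixed-point equation. Your route is more self-contained and shows exactly where $\psi(L)>0$ a.s.\ is used. The paper's route is shorter and ties the support edge to the function $x_q$.

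Two points need tightening.

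\emph{Identification in your second route.} The uniqueness in Theorem \ref{thm:generalized_main} is a statement about functions on all of $\C^+$, whereas $G(z)=\sum_k m_k z^{-k-1}$ is only defined for $|z|>C$. Identify $G$ with $m$ locally instead:
\begin{itemize}
\item $K(w)=1/w+R(w)$ is injective on a small punctured disc around $0$, since $1/K$ is analytic there with derivative $1$ at $0$.
\item Both $m(iy)$ (because $|m(iy)|\le 1/y$) and $G(iy)$ lie in that disc for large $y$, and both satisfy $K(\cdot)=iy$, so they coincide.
\item Analytic continuation then gives $m=G$ on $\C^+\cap\{|z|>C\}$.
\item Since $G$ is real-analytic off $[-C,C]$, Stieltjes inversion gives the support bound.
\end{itemize}

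\emph{The obstacle you flag for $q\le 1$ is not real.} With the paper's convention, $iy\,m(iy)\to F(\{0\})$ exactly, not merely up to sign. Because $F$ lives on $[0,\infty)$,
$$\operatorname{Re} m(iy)=-\int \frac{x}{x^2+y^2}\,dF(x)\le 0.$$
Hence $a=qm(iy)\psi$ satisfies $\operatorname{Re} a\le 0$, so $|1-a|\ge |a|$ and $|a/(1-a)|\le 1$. This is precisely the domination your dominated convergence step needs, and you did not justify it.

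Now use that the limit $u(iy)\to F(\{0\})$ always exists. If $F(\{0\})>0$, then $|m(iy)|\ge F(\{0\})/(2y)\to\infty$ for small $y$. Dominated convergence, using $\psi>0$ a.s., then forces $F(\{0\})=1-1/q$. So either $F(\{0\})=0$ or $F(\{0\})=1-1/q>0$, and no intermediate case can occur. In particular, $F(\{0\})=0$ when $q\le 1$.

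For $q>1$ the same bound also rules out $F(\{0\})=0$ without the rank argument. Indeed, $u\to 0$ would require $\E[a/(1-a)]\to -q$, which is impossible since $|\E[a/(1-a)]|\le 1<q$. Your argument via $\operatorname{rank} R_N\le T$ is nevertheless also valid.
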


\brem
The first moment in Corollary \ref{coro:gene-moment-proof} is in general smaller than one. Indeed, if
$Y=g(H(L,\eps))$, then $\E[Y]=0$ and $\E[Y^2]=1$, since $F(H(L,\eps))$ is uniform on $(0,1)$. By the total variance formula,
$$
1
=
\Var(Y)
=
\E[\Var(Y\mid L)]
+
\Var(\E[Y\mid L])
=
\E[\psi(L)]+\E[\phi(L)^2],
$$
where we used $\E[\phi(L)]=\E[Y]=0$. Hence
$$
\int x\,dF_{q,\mu_\psi}(x)
=
\E[\psi(L)]
=
1-\E[\phi(L)^2]
\leq 1.
$$
This does not contradict the fact that the Spearman matrix has normalized trace equal to one. The weak empirical spectral distribution only captures the limiting bulk. When $\phi$ is not identically zero, the missing first moment is carried by the finite-rank component discussed in Remark \ref{rem:outlier_eigval}, through one or finitely many eigenvalues of order $N$, whose contribution to the normalized trace is of order one, while their total mass in the empirical spectral distribution is $O(1/N)$.
\erem

\bpr
The proof is identical to that of Corollary \ref{coro:moment-proof}, with $\mu_s$ replaced by $\mu_\psi$.
\epr

\subsection{Models covered by the theorem}

\subsubsection{Scale-mixture model}
The scale-mixture model of Section \ref{sec:model-result} is recovered directly from \eqref{eq:generalized_model}. Indeed, take $m=1$, $L=\sigma$, $\eps=\xi$, and $H(\sigma,\xi)=\sigma\xi$. Then the data generating process becomes
$$
x_{t,n}=H(\sigma_t,\xi_{t,n})=\sigma_t\xi_{t,n},
$$
which is exactly \eqref{eq:scale-mixture-model}. In this case, the marginal distribution of $\sigma\xi$ is symmetric, hence the population rank transform $g$ is odd. Therefore, $\phi(\sigma)=\E[g(\sigma\xi)]=0.$ Moreover,
$$
\psi(\sigma)
=
\Var[g(\sigma\xi)]
=
\E[g(\sigma\xi)^2]
=
s(\sigma).
$$
Thus $\mu_\psi=\mu_s$, and Theorem \ref{thm:generalized_main} reduces to Theorem \ref{thm:main}. In particular, since $\phi(\sigma)=0,$ the finite-rank component discussed above vanishes and no large isolated eigenvalue is produced by the conditional mean.

The same framework also allows for a common drift, or market-direction, component. For instance, let $(\mu_t)$ be independent of $(\sigma_t)$ and $(\xi_{t,n})$, take $L_t=(\mu_t,\sigma_t)$, and define $H((\mu,\sigma),\xi)=\mu+\sigma\xi$. Then
$$
x_{t,n}=\mu_t+\sigma_t\xi_{t,n}.
$$
In this case, the limiting bulk is governed by the law of $\psi(\mu,\sigma)=\Var[g(\mu+\sigma\xi)]$, while a nonzero value of $\phi(\mu,\sigma)=\E[g(\mu+\sigma\xi)]$ may generate an additional large isolated eigenvalue without changing the limiting empirical spectral distribution.

\subsubsection{One-factor Gaussian model}

Let $\rho\in[0,1)$. Consider independent families $(\sigma_t)_t$, $(\eta_t)_t$ and $(\eps_{t,n})_{t,n}$, where $(\sigma_t)_t$ are i.i.d.\ positive random variables, $(\eta_t)_t$ are i.i.d.\ standard Gaussian variables, and $(\eps_{t,n})_{t,n}$ are i.i.d.\ standard Gaussian variables. Take $L_t=(\sigma_t,\eta_t)$ and define
$H((\sigma,\eta),\eps)=\sigma(\sqrt{\rho}\eta+\sqrt{1-\rho}\eps)$. Then
$$
x_{t,n}
=
\sigma_t(\sqrt{\rho}\eta_t+\sqrt{1-\rho}\eps_{t,n}).
$$
Conditionally on $L_t=(\sigma_t,\eta_t)$, the coordinates are independent, and the limiting bulk is governed by the law of
$$
\psi(\sigma,\eta)
=
\Var\!\left[g\!\left(\sigma(\sqrt{\rho}\eta+\sqrt{1-\rho}\eps)\right)\right],
$$
where the variance is taken only with respect to $\eps$. When $\rho=0$, this reduces to the scale-mixture model with Gaussian directional components, and $\operatorname{Law}(\psi(\sigma,\eta))=\operatorname{Law}(s(\sigma))$.

This model can also be viewed as a scale-mixture model with correlated Gaussian directions. Indeed, setting
$$
\xi_t=\sqrt{\rho}\eta_t\mathsf{1}_N+\sqrt{1-\rho}\eps_t,
\qquad
\eps_t=(\eps_{t,1},\ldots,\eps_{t,N})^\top,
$$
we have $\xi_t\sim \mathcal N(0,\Sigma_\rho)$, where
$$
\Sigma_\rho=(1-\rho)I_N+\rho\,\mathsf{1}_N\mathsf{1}_N^\top
$$
is the equicorrelation matrix. Hence $x_t=\sigma_t\xi_t$. In particular, if $\sigma_t^2\sim \operatorname{InvGamma}(\nu/2,\nu/2)$ for some $\nu>0$, independently of $\xi_t$, then $x_t$ has a multivariate Student distribution with $\nu$ degrees of freedom and scale matrix $\Sigma_\rho$.

The case $\rho=1$ is degenerate from the point of view of Theorem \ref{thm:generalized_main}, since the conditional variance $\psi(L)$ vanishes. Non-degenerate extensions with several common factors, or with a time-varying correlation parameter $\rho_t$, can be handled in the same way.

\subsubsection{Regime-switching model}

Let $K\ge 2$, and let $J_t\in\{1,\ldots,K\}$ be an i.i.d.\ regime variable such that $\P(J_t=k)=p_k$, with $p_k>0$ and $\sum_{k=1}^K p_k=1$. Take $L_t=J_t$, and let $(\eps_{t,n})_{t,n}$ be i.i.d.\ copies of a real random variable $\eps$, independent of $(J_t)_t$. Given parameters $\mu_1,\ldots,\mu_K\in\R$ and $\sigma_1,\ldots,\sigma_K>0$, define
$H(k,\eps)=\mu_k+\sigma_k\eps$. Then
$$
x_{t,n}=\mu_{J_t}+\sigma_{J_t}\eps_{t,n}.
$$
Thus, at each time $t$, all coordinates share the same regime $J_t$, while remaining conditionally independent given this regime. In this model, the limiting bulk is governed by the discrete measure
$$
\mu_\psi=\sum_{k=1}^K p_k \delta_{\psi_k},
\qquad
\psi_k=\Var[g(\mu_k+\sigma_k\eps)].
$$
Assuming $\psi_k>0$ for every $k$, Theorem \ref{thm:generalized_main} gives the limiting spectral distribution through the equation
$$z = \frac{1}{m(z)}+ \sum_{k=1}^K p_k
\frac{\psi_k}{1-q\psi_k m(z)}.
$$

\subsubsection{Time-varying tail-thickness model}

The latent-variable framework can also describe models in which the tail thickness changes over time. Let $L_t=(\sigma_t,\nu_t)$, where $\sigma_t>0$ is a scale parameter and $\nu_t>0$ is a time-varying degrees-of-freedom parameter. Let $(\eps_{t,n})_{t,n}$ be i.i.d.\ uniform random variables on $(0,1)$, independent of $(L_t)_t$, and define
$$
H((\sigma,\nu),\eps)=\sigma F_\nu^{-1}(\eps),
$$
where $F_\nu$ denotes the distribution function of a Student $t_\nu$ random variable. Then
$$
x_{t,n}=\sigma_t F_{\nu_t}^{-1}(\eps_{t,n}).
$$
Conditionally on $L_t=(\sigma_t,\nu_t)$, the coordinates are i.i.d.\ with distribution $\sigma_t t_{\nu_t}$. The limiting bulk is therefore governed by the law of
$$ \psi(\sigma,\nu) = \Var[g(\sigma F_\nu^{-1}(\eps))].$$
This model allows both the volatility level and the tail thickness of the cross-sectional distribution to vary over time.


\section{Proofs of Theorems \ref{thm:main} and \ref{thm:generalized_main}}\la{sec:proof}

\subsection{Proof of Theorem \ref{thm:main}} 
The proof proceeds in three steps. We first replace the ranks $Q_{n,t}$ by the oracle scores $g(x_{t,n})$, whose sample covariance has a generalized Marčenko--Pastur limit $F_{q,\mu_s}$ after conditioning on the mixture variable $\sigma$. We then pass from oracle scores to empirical cumulative distribution function (ECDF) scores via the Dvoretzky--Kiefer--Wolfowitz (DKW) inequality, following the approach of \cite{wu2022limiting}. Finally, we show that the exact rank normalization defining $R_N$ differs from the ECDF matrix by a vanishing perturbation. We shall use the following notation throughout the proof. We write $\Rightarrow$ for weak convergence of probability measures and denote by $L(\cdot,\cdot)$ the Lévy distance between distribution functions. For an $N\times N$ Hermitian matrix $A$, we write $\|A\|$ for its spectral norm and, for $z\in\C^+,$ $m_A(z)=\frac1N\Tr(zI_N-A)^{-1}$ its Stieltjes transform.

\medskip

Define the oracle-score matrix $V\in\R^{N\times T}$ by $V_{n,t}=g(x_{t,n})$ and set
\be
C_N^{\mathrm{or}}
=
\frac{1}{T}VV^\top.
\la{eq:proof-oracle-matrix}
\ee
Next, for each coordinate $n$, let
$$
\widehat F_n(x)
=
\frac{1}{T}\sum_{t=1}^T \mathbf 1_{\{x_{t,n}\leq x\}},
\qquad x\in\R,
$$
and define the ECDF scores $ \widehat V_{n,t} = \sqrt{3}\bigl(2\widehat F_n(x_{t,n})-1\bigr).$
Writing $\widehat V=(\widehat V_{n,t}) \in \R^{N\times T}$, set
$$
\widehat C_N
=
\frac{1}{T}\widehat V\widehat V^\top.
$$
Heuristically, the proof of Theorem \ref{thm:main} is then organized as follows:
$$
C_N^{\mathrm{or}}
\quad \longrightarrow \quad
\widehat C_N
\quad \longrightarrow \quad
R_N \quad \longrightarrow \quad F_{q,\mu_s}.
$$
More precisely, we prove in Proposition \ref{prop:oracle-score-lsd} that $F^{C_N^{\mathrm{or}}}
\Rightarrow
F_{q,\mu_s}$ almost surely, then show in Proposition \ref{prop:ecdf-pop-comparison} that $L\!\left(F^{C_N^{\mathrm{or}}},F^{\widehat C_N}\right)
\longrightarrow 0$ almost surely, and finally prove in Proposition \ref{prop:true_spearman_same_lsd} that $\left\|R_N-\widehat C_N\right\|
\longrightarrow 0$ almost surely.
Now fix $z\in\C^+$. Using the resolvent identity,
\begin{align}
\bigl|m_{R_N}(z)-m_{\widehat C_N}(z)\bigr|
&=
\frac1N\Bigl|
\Tr\Bigl(
(zI_N-R_N)^{-1}
(R_N-\widehat C_N)
(zI_N-\widehat C_N)^{-1}
\Bigr)
\Bigr|
\nonumber\\
&\le
\|(zI_N-R_N)^{-1}\|\,
\|R_N-\widehat C_N\|\,
\|(zI_N-\widehat C_N)^{-1}\|
\nonumber\\
&\le
\frac{\|R_N-\widehat C_N\|}{(\Im z)^2} \longrightarrow 0 \quad \text{a.s.}\nonumber
\end{align}
Together, these results imply $F^{R_N}\Rightarrow F_{q,\mu_s}$ almost surely, which proves Theorem \ref{thm:main}.

\begin{proposition}
\la{prop:oracle-score-lsd}
Assume the hypotheses of Theorem \ref{thm:main}. Then the empirical spectral
distribution of the oracle-score matrix $C_N^{\mathrm{or}}$ converges weakly almost surely to a deterministic probability measure
$F_{q,\mu_s}$ on $[0,\infty)$.
Moreover, its Stieltjes transform $m(\cdot)$ is the unique function $m:\C^+\to \C^-$ satisfying
\be
z=
\frac{1}{m(z)}
+
\int_{[0,3]}
\frac{x}{1-qxm(z)}\,d\mu_s(x),
\qquad z\in\C^+.
\la{eq:oracle-score-fixed-point}
\ee
\end{proposition}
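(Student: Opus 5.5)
The plan is to condition on the whole mixture sequence $(\sigma_t)_{t\ge1}$ and to recognise $C_N^{\mathrm{or}}$ as a sample covariance matrix with i.i.d.\ rows and a random diagonal weight in the time direction. Conditionally on $\sigma=(\sigma_t)_t$, the entries $V_{n,t}=g(\sigma_t\xi_{t,n})$ are independent and bounded by $\sqrt3$. Since $g$ is odd and $\xi$ is symmetric, they are centred, and their conditional variances are $\E[V_{n,t}^2\mid\sigma]=s(\sigma_t)$. We therefore write $V=W D^{1/2}$ with $D=\diag(s(\sigma_1),\ldots,s(\sigma_T))$ and $W_{n,t}=g(\sigma_t\xi_{t,n})/\sqrt{s(\sigma_t)}$; this uses $s(\sigma)>0$ a.s. Conditionally on $\sigma$, $W$ has independent, centred, unit-variance entries. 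They are not identically distributed, since the law of $W_{n,t}$ depends on $\sigma_t$. Then $C_N^{\mathrm{or}}=\frac1T W D W^\top$.

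By the strong law of large numbers, the empirical distribution $\frac1T\sum_t\delta_{s(\sigma_t)}$ converges weakly to $\mu_s$ almost surely, and it is supported in $[0,3]$. We then invoke the Silverstein--Bai theorem for matrices of the form $\frac1T W D W^\top$, as in \cite{silverstein1995empirical}, in its version for independent but non-identically distributed entries. Under a Lindeberg-type condition, the ESD converges a.s.\ to the law whose Stieltjes transform solves the generalized MP equation. We will apply it conditionally on a fixed realisation of $(\sigma_t)_t$ in the almost-sure set where the weights converge, and then integrate over $\sigma$ by Fubini to get almost sure convergence unconditionally. The Lindeberg condition is harmless here: $|W_{n,t}|\le\sqrt3/\sqrt{s(\sigma_t)}$.

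The main obstacle is that $s(\sigma_t)$ may be arbitrarily small, so the normalized entries $W_{n,t}$ need not be uniformly bounded. The remedy is to avoid normalizing at all. We work directly with $V$, whose entries have variance profile $s(\sigma_t)\le 3$ and are bounded by $\sqrt3$, and use a resolvent/variance-profile argument in which the profile depends only on the column. Alternatively, we truncate: replace columns with $s(\sigma_t)<\delta$ by zero, which is a rank perturbation of size $\#\{t:s(\sigma_t)<\delta\}\approx T\,\P(s(\sigma)<\delta)$. The rank inequality then gives a Lévy-distance error of at most $T\P(s(\sigma)<\delta)/N\to q^{-1}\P(s(\sigma)<\delta)$, which vanishes as $\delta\to0$. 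On the remaining columns the normalized entries are bounded by $\sqrt{3/\delta}$, and the standard theorem applies with weight law $\mu_s$ restricted to $[\delta,3]$, together with a zero atom. Continuity of the solution of the fixed-point equation in $\mu_s$ then lets $\delta\to0$.

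To identify the limit, we rewrite the Silverstein equation for the companion matrix $\frac1T D^{1/2}W^\top W D^{1/2}$. Translating to the $N\times N$ Stieltjes transform with the paper's sign convention $m(z)=\int (z-x)^{-1}d\mu$ gives $z=\frac1{m}+\int\frac{x}{1-qxm}\,d\mu_s(x)$. Uniqueness of a solution $m:\C^+\to\C^-$ is part of the cited theorem (equivalently, the standard uniqueness of the generalized MP equation), and the limit is deterministic because $\mu_s$ is.
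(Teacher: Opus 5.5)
Your proposal is correct. Its skeleton is the paper's: condition on $\mathcal G=\sigma(\sigma_1,\sigma_2,\dots)$ and normalize $w_{t,n}=g(x_{t,n})/\sqrt{s_t}$, so that $C_N^{\mathrm{or}}$ is a weighted sample covariance with conditionally independent, centred, unit-variance entries. You then apply the Lindeberg (triangular-array) version of the Silverstein--Bai theorem on the probability-one set where $\mu_T^s\Rightarrow\mu_s$, and rescale $1/N\to1/T$.

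You diverge only in how you handle small weights, and there the ``main obstacle'' is not an obstacle. It also contradicts your own earlier remark that Lindeberg is harmless. The Lindeberg condition does not require uniformly bounded entries. The paper uses $|w_{t,n}|^2\le 3/s_t$ to get $\{|w_{t,n}|>\eta\sqrt N\}\subset\{s_t<3/(\eta^2N)\}$. Since $\E_\omega[w_{t,n}^2]=1$, the Lindeberg sum is therefore at most $\frac1T\#\{t:s_t<3/(\eta^2N)\}$. By Portmanteau its $\limsup$ is at most $\mu_s([0,\delta])$ for every $\delta>0$, hence zero because $\mu_s(\{0\})=0$. This is exactly the fact your truncation exploits, namely that the asymptotic fraction of small weights is negligible, but used in one line.

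Your truncation route does work. Zeroing the columns with $s_t<\delta$ is a perturbation of rank $\#\{t:s_t<\delta\}$, whose size divided by $N$ tends to $q^{-1}\P(s(\sigma)<\delta)$. The remaining normalized entries are bounded by $\sqrt{3/\delta}$.

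However, the route costs an extra step that you only assert: the continuity $F_{q,\nu_\delta}\Rightarrow F_{q,\mu_s}$ as $\delta\to0$, where $\nu_\delta$ is the law of $s(\sigma)\mathbf 1_{\{s(\sigma)\ge\delta\}}$. You can close it by running the same rank-inequality comparison on an auxiliary model with i.i.d.\ Gaussian $W$. Since Lévy distance is dominated by Kolmogorov distance and is continuous under weak limits, this gives $L(F_{q,\mu_s},F_{q,\nu_\delta})\le q^{-1}\P(s(\sigma)<\delta)$.

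Note also what the truncation does not buy. The law of $W_{n,t}$ still depends on $\sigma_t$, so you still need the non-identically-distributed version of the theorem. Truncation only makes the Lindeberg check trivial, and that check was already easy. The vaguer ``variance-profile'' alternative is unnecessary.
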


\bpr
Let $(\Omega, \mathcal{F}, \mathbb{P})$ be a probability space. Set $s_t=s(\sigma_t)$ and $D_T=\diag(s_1,\dots,s_T).$ When $s_t>0$ for all $1\leq t \leq T$, define $w_{t,n}=\frac{g(x_{t,n})}{\sqrt{s_t}}$ and $W_T=(w_{t,n})_{1\le t\le T,\ 1\le n\le N}\in\R^{T\times N}.$
Then, \eqref{eq:proof-oracle-matrix} becomes
$$
C_N^{\rm or}=\frac1T W_T^\top D_T W_T.
$$
Let $\mathcal G=\sigma(\sigma_1,\sigma_2,\dots)$ be the $\sigma$-algebra generated by the volatility sequence $(\sigma_t)_{t\geq1}$. Conditionally on $\mathcal G$, the random variables $(w_{t,n})_{t,n}$ are independent. Moreover, by Remark \ref{rmk:condi_spearman_moments} for the first identity and by the definition of $s_t$ for the second, we have
$$
\E[w_{t,n}\mid \mathcal G]=0,
\qquad
\E[w_{t,n}^2\mid \mathcal G]=1
\qquad\text{for all }t,n.
$$
Because the $s_t$ are i.i.d.\ with common law $\mu_s$, the empirical distribution function $\mu_T^s$ of $\{s_1,\ldots,s_T\}$ converges weakly to $\mu_s$ almost surely.
Let
$$
\Omega_0=
\Big\{\mu_T^s \Rightarrow \mu_s\Big\}
\cap
\Big\{s_t>0\text{ for all }t\ge1\Big\}.
$$ Then $\Omega_0 \in \mathcal{G}$ and $\P(\Omega_0)=1$. For fixed $\omega \in \Omega_0$, we work under the conditional probability $\mathbb{P}_{\omega}(\cdot) = \mathbb{P}(\cdot|\mathcal{G})(\omega)$ and denote by $\E_\omega$ the corresponding expectation. Under $\P_\omega,$ we verify that the assumptions of Lemma \ref{lem:generalized-mp} are satisfied: the matrix $D_T$ is deterministic, its empirical spectral distribution is $\mu_T^s(\omega)=\frac1T\sum_{t=1}^T \delta_{s_t(\omega)}$ and $\mu_T^s(\omega)\Rightarrow \mu_s$ which has compact support on $[0,3]$. It remains to verify the Lindeberg condition
\eqref{eq:linderberg}. For $\eta>0$, define
$$
L_T^{\omega}(\eta)=
\frac1{NT}\sum_{t=1}^T\sum_{n=1}^N
\E_{\omega}\!\!\Big[w_{t,n}^2\,\mathbf 1_{\{|w_{t,n}|>\eta\sqrt N\}}\Big].
$$
Since, conditionally on $\mathcal G$, the $N$ variables in each row have the same law,
$$
L_T^{\omega}(\eta)
=
\frac1T\sum_{t=1}^T
\E_{\omega}\!\!\Big[w_{t,1}^2\,\mathbf 1_{\{|w_{t,1}|>\eta\sqrt N\}}\Big].
$$
Since $|g| \le \sqrt3$, we have $|w_{t,n}|^2\le \frac{3}{s_t}$ for all $t,n$.
Thus, on the event $\{|w_{t,1}|>\eta\sqrt N\}$ we must have $\frac{3}{s_t(\omega)}\ge |w_{t,1}|^2>\eta^2 N,$ hence $\{|w_{t,1}|>\eta\sqrt N\}\subset\Big\{s_t(\omega)<\frac{3}{\eta^2 N}\Big\}.$
Therefore, since $\E_{\omega}[w_{t,1}^2]=1$,
$$
L_T^{\omega}(\eta)
\le
\frac1T\sum_{t=1}^T \mathbf 1_{\{s_t(\omega)<3/(\eta^2 N)\}}.
$$
Fix $\delta>0$. For all large
enough $T$, we have $3/(\eta^2 N)<\delta$, and thus
$$
\frac1T\sum_{t=1}^T \mathbf 1_{\{s_t(\omega)<3/(\eta^2 N)\}}
\le
\frac1T\sum_{t=1}^T \mathbf 1_{\{s_t(\omega)\le \delta\}}
=
\mu_T^s(\omega)([0,\delta]).
$$
Using Portmanteau's theorem for the closed set $[0,\delta]$, $\limsup_{T\to\infty}\mu_T^s(\omega)([0,\delta])
\le
\mu_s([0,\delta]).$
Since $\delta>0$ is arbitrary and $\mu_s(\{0\})=0$ by assumption, letting $\delta\downarrow0$ gives, by continuity from above,
$$
L_T^{\omega}(\eta)\longrightarrow0
\qquad\text{for every }\eta>0.
$$
Therefore, all the assumptions of Lemma \ref{lem:generalized-mp} are satisfied under $\P_{\om}$. Since the limiting law is independent of $\omega \in \Omega_0$ and $\mathbb{P}(\Omega_0)=1$, this proves the result of the proposition.
\epr

\medskip
\begin{proposition}\la{prop:ecdf-pop-comparison}
Assume the hypotheses of Theorem \ref{thm:main}.
Then
$$ 
    L\!\left(F^{C_N^{\mathrm{or}}},F^{\widehat C_N}\right) \longrightarrow 0 \qquad \text{a.s.}
$$
\end{proposition}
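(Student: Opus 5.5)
The plan is to control the Lévy distance through the standard rank-type and norm-type perturbation inequalities (Bai's inequality): for $N\times T$ matrices $A,B$,
$$
L^4\!\left(F^{\frac1T AA^\top},F^{\frac1T BB^\top}\right)\le \frac{2}{N^2T^2}\Tr\bigl(AA^\top+BB^\top\bigr)\,\Tr\bigl((A-B)(A-B)^\top\bigr).
$$
We apply it with $A=V$ and $B=\widehat V$. Because $|g|\le\sqrt3$ and $|\widehat V_{n,t}|\le\sqrt3$, the first trace factor satisfies $\frac{1}{NT}\Tr(VV^\top+\widehat V\widehat V^\top)\le 6$ deterministically. The problem therefore reduces to showing
$$
\frac{1}{NT}\sum_{n=1}^N\sum_{t=1}^T\bigl(\widehat V_{n,t}-V_{n,t}\bigr)^2\longrightarrow0\quad\text{a.s.}
$$

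Entrywise we have $|\widehat V_{n,t}-V_{n,t}|=2\sqrt3\,|\widehat F_n(x_{t,n})-F(x_{t,n})|\le 2\sqrt3\,D_n$, where $D_n=\sup_x|\widehat F_n(x)-F(x)|$. It is then enough to prove $\frac1N\sum_n D_n^2\to0$ a.s., and it is sufficient to show $\max_{n\le N}D_n\to0$ a.s.

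The main obstacle is that, for fixed $n$, the sample $x_{1,n},\dots,x_{T,n}$ consists of i.i.d.\ draws with marginal law $F$. The columns are not independent across $n$, but the DKW inequality only needs the within-coordinate i.i.d.\ structure. We have $x_{t,n}=\sigma_t\xi_{t,n}$, and the vectors $(\sigma_t,\xi_{t,n})_{t}$ are i.i.d.\ over $t$, so for each fixed $n$ the variables $(x_{t,n})_{t=1}^T$ are i.i.d.\ with continuous distribution $F$. DKW (with Massart's constant) then gives $\P(D_n>\epsilon)\le 2e^{-2T\epsilon^2}$ for every $n$, with no independence across $n$ required. A union bound yields
$$
\P\Bigl(\max_{n\le N}D_n>\epsilon\Bigr)\le 2Ne^{-2T\epsilon^2}.
$$
Since $N/T\to q$, this is summable in $T$ for each fixed $\epsilon>0$. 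Borel--Cantelli then gives $\limsup_T\max_{n\le N}D_n\le\epsilon$ a.s., and letting $\epsilon\downarrow0$ along a countable sequence gives $\max_n D_n\to0$ a.s.

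Combining the steps, $L^4(F^{C_N^{\mathrm{or}}},F^{\widehat C_N})\le 2\cdot6\cdot 12\max_nD_n^2\to0$ almost surely, which proves the proposition. One subtlety to check is that the matrices are indexed by $T$ with $N=N(T)$, so the union bound must be applied row-wise along the triangular array. This causes no difficulty, because the DKW bound is uniform in the underlying distribution.
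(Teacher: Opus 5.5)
Your proposal is correct and follows the paper's proof essentially step for step. You use DKW with a union bound over the $N$ coordinates and Borel--Cantelli to get $\max_{n\le N}\sup_x|\widehat F_n(x)-F(x)|\to 0$ a.s., and then Lemma~\ref{coro:coro-a42}, with the trace factor bounded via $|V_{n,t}|,|\widehat V_{n,t}|\le\sqrt3$, gives $L^4\le 2\cdot 6\cdot 12\,\Delta_T^2\to 0$; your explicit limit $\epsilon\downarrow 0$ along a countable sequence is a detail the paper leaves implicit.
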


\bpr
For each fixed $n$, the random variables $x_{1,n},\dots,x_{T,n}$ are i.i.d. with distribution function $F$. Hence, by the DKW inequality, for every $\varepsilon>0$,
$$
    \mathbb P\left(
        \sup_{x\in\mathbb R}
        \left|\widehat F_n(x)-F(x)\right|
        >
        \varepsilon
    \right)
    \leq
    2e^{-2T\varepsilon^2}.
$$
Taking a union bound over $n=1,\dots,N$ gives
$$
    \mathbb P\left(
        \max_{1\leq n\leq N}
        \sup_{x\in\mathbb R}
        \left|\widehat F_n(x)-F(x)\right|
        >
        \varepsilon
    \right)
    \leq
    2Ne^{-2T\varepsilon^2}.
$$
Since $N/T\to q$, the right-hand side is summable in $T$. Therefore, by the Borel--Cantelli lemma,
$$
    \Delta_T
    =
    \max_{1\leq n\leq N}
    \sup_{x\in\mathbb R}
    \left|\widehat F_n(x)-F(x)\right|
    \longrightarrow 0
    \qquad \text{a.s.}
$$
Consequently,
$$
    \max_{\substack{1\leq n\leq N\\ 1\leq t\leq T}}
    \left|
        \widehat V_{n,t}-V_{n,t}
    \right|
    \leq
    2\sqrt{3}\,\Delta_T
    \longrightarrow 0
    \qquad \text{a.s.}
$$
Now set $Y_N=\frac{1}{\sqrt T}V$ and $\widehat Y_N=\frac{1}{\sqrt T}\widehat V$ such that $ C_N^{\mathrm{or}} = Y_NY_N^\top$ and $\widehat C_N = \widehat Y_N\widehat Y_N^\top.$
Applying Lemma \ref{coro:coro-a42} with $A=Y_N$ and $B=\widehat Y_N$, we obtain
$$
    L^4\left(
        F^{C_N^{\mathrm{or}}},
        F^{\widehat C_N}
    \right)
    \leq
    \frac{2}{N^2}
    \Tr\left(Y_NY_N^\top+\widehat Y_N\widehat Y_N^\top\right)
    \Tr\left((Y_N-\widehat Y_N)(Y_N-\widehat Y_N)^\top\right).
$$
Since $|V_{n,t}|\leq \sqrt{3}$ and $|\widehat V_{n,t}|\leq \sqrt{3},$
we have
$$
    \frac{1}{N}\Tr(Y_NY_N^\top)
    =
    \frac{1}{NT}\sum_{n=1}^N\sum_{t=1}^T |V_{n,t}|^2
    \leq 3
$$
and we get the same upper bound for $\frac{1}{N}\Tr(\widehat Y_N\widehat Y_N^\top).$ Moreover,
$$
\begin{aligned}
    \frac{1}{N}
    \Tr\left((Y_N-\widehat Y_N)(Y_N-\widehat Y_N)^\top\right)
    &=
    \frac{1}{NT}
    \sum_{n=1}^N\sum_{t=1}^T
    |V_{n,t}-\widehat V_{n,t}|^2 \\
    &\leq
    12\Delta_T^2
    \longrightarrow 0
    \qquad \text{a.s.}
\end{aligned}
$$
This proves the claim.
\epr

\medskip

\begin{proposition}\la{prop:true_spearman_same_lsd}
Assume the hypotheses of Theorem \ref{thm:main}. Then 
$$\left\|R_N-\widehat C_N\right\|
\longrightarrow 0
\qquad \text{almost surely}.$$
\end{proposition}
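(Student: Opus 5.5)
Since there are almost surely no ties, $\widehat F_n(x_{t,n})=Q_{n,t}/T$, so $\widehat V_{n,t}=\sqrt3\,(2Q_{n,t}/T-1)$ is an exact affine function of the rank. The plan is to exploit this to write $\widehat V$ as a deterministic rescaling of $Z$ plus a rank-one matrix, and then bound the spectral norm of the remaining difference directly.

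Writing $c_T=\sqrt{T(T^2-1)/12}$, we get
$$
\widehat V_{n,t}=\frac{2\sqrt3}{T}\Bigl(Q_{n,t}-\tfrac{T+1}{2}\Bigr)+\frac{\sqrt3}{T}
=\frac{2\sqrt3\,c_T}{T}Z_{n,t}+\frac{\sqrt3}{T}.
$$
Hence
$$
\widehat V=a_T Z+\frac{\sqrt3}{T}\mathsf 1_N\mathsf 1_T^\top,\qquad a_T=\frac{2\sqrt3\,c_T}{T}.
$$
Each row of $Z$ is a centered permutation of $\{1,\dots,T\}$, so $Z\mathsf 1_T=0$ exactly. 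The cross terms therefore vanish and
$$
\widehat C_N=\frac1T\widehat V\widehat V^\top=\frac{a_T^2}{T}ZZ^\top+\frac{3}{T^2}\mathsf 1_N\mathsf 1_N^\top
=\frac{T^2-1}{T^2}R_N+\frac{3}{T^2}\mathsf 1_N\mathsf 1_N^\top,
$$
using $a_T^2/T=12c_T^2/T^3=(T^2-1)/T^2$.

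It follows that
$$
\|R_N-\widehat C_N\|\le \frac{1}{T^2}\|R_N\|+\frac{3N}{T^2}.
$$
The second term is $O(1/T)$ because $N/T\to q$. For the first term, I would use the crude bound $\|R_N\|\le\Tr R_N=N$, which holds since $R_N\succeq0$ has unit diagonal. This gives $\|R_N\|/T^2\le N/T^2\to0$ deterministically. So the convergence holds on the full-probability event of no ties, with no probabilistic input beyond that.

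The only point needing care is the exact algebraic identity. I need to confirm that $Z\mathsf 1_T=0$, so that no cross terms survive, and that the normalizing constants match. If the constants were off by a factor of $1+O(1/T)$, the crude trace bound on $\|R_N\|$ would no longer suffice: $\|R_N\|$ could be of order $N$ in the worst case, and I would then need the almost-sure boundedness of $\|\widehat C_N\|$ instead. Here, however, the discrepancy is $O(1/T^2)$, so the trivial bound is enough.
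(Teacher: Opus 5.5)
Your proposal is correct and follows essentially the same route as the paper: both use the exact row-sum property of ranks on the no-ties event to obtain the identity $R_N-\widehat C_N=\frac{1}{T^2}R_N-\frac{3}{T^2}\mathbf 1_N\mathbf 1_N^\top$ (the paper writes it equivalently as $\frac{1}{T^2-1}\widehat C_N-\frac{3}{T^2-1}\mathbf 1_N\mathbf 1_N^\top$), and both then bound the spectral norm by the trace. The only difference is cosmetic: you solve for $\widehat C_N$ in terms of $R_N$ and use $\Tr R_N=N$, whereas the paper solves for $R_N$ in terms of $\widehat C_N$ and uses $\Tr \widehat C_N\le 3N$.
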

\bpr
Since $F$ is continuous, ties occur with probability zero. By a countable intersection argument, there are no ties in the data on a probability-one event. Note that for every $n,t$,
$$
Z_{n,t}
=
\frac{T}{\sqrt{T^2-1}}\frac{(\widehat V)_{n,t}}{\sqrt T}
-\frac{\sqrt3}{\sqrt{T(T^2-1)}}.
$$
Hence, if we write $ \mathbf 1_N=(1,\dots,1)^\top\in\R^N$ and $\mathbf 1_T=(1,\dots,1)^\top\in\R^T,$
then
\be
Z
=
\frac{T}{\sqrt{T^2-1}}\,\widehat Y_N
-\frac{\sqrt3}{\sqrt{T(T^2-1)}}\,\mathbf 1_N\mathbf 1_T^\top.
\la{eq:matrix_identity_Z}
\ee
In the absence of ties, $Q_{n,1},\dots,Q_{n,T}$ is a permutation of $\{1,\dots,T\}$ so that,
$$
\sum_{t=1}^T (\widehat V)_{n,t}
=
\sqrt3\sum_{t=1}^T\Bigl(\frac{2Q_{n,t}}{T}-1\Bigr)
=
\sqrt3.
$$
Thus
\be
\widehat Y_N\mathbf 1_T=\sqrt{\frac3T}\,\mathbf 1_N.
\la{eq:U_times_one}
\ee
Now expand $R_N\!=\!ZZ^\top\!$ using \eqref{eq:matrix_identity_Z}. Since $(\mathbf 1_N\mathbf 1_T^\top)(\mathbf 1_N\mathbf 1_T^\top)^\top \!=
T\,\mathbf 1_N\mathbf 1_N^\top,$
and using \eqref{eq:U_times_one}, we obtain
\begin{align}
R_N
&=
\frac{T^2}{T^2-1}\widehat Y_N \widehat Y_N^\top
-\frac{2T}{\sqrt{T^2-1}}\frac{\sqrt3}{\sqrt{T(T^2-1)}}\,
\widehat Y_N\mathbf 1_T\mathbf 1_N^\top
+\frac{3}{T(T^2-1)}\,T\,\mathbf 1_N\mathbf 1_N^\top
\nonumber\\
&=
\frac{T^2}{T^2-1}\widehat C_N
-\frac{3}{T^2-1}\,\mathbf 1_N\mathbf 1_N^\top\nonumber.
\end{align}
Consequently,
\be
R_N-\widehat C_N
=
\frac1{T^2-1}\widehat C_N
-\frac{3}{T^2-1}\,\mathbf 1_N\mathbf 1_N^\top.
\la{eq:difference_identity}
\ee
Moreover, $
\|\widehat C_N\|
\le
\Tr(\widehat C_N)
\le 3N$ and 
$\|\mathbf 1_N\mathbf 1_N^\top\|=N.$
Therefore, by \eqref{eq:difference_identity},
$$
\|R_N-\widehat C_N\|
\le
\frac{3N}{T^2-1}+\frac{3N}{T^2-1}
=
\frac{6N}{T^2-1}.
$$
Since $N/T\to q\in(0,\infty)$, this yields the result.
\epr

\subsection{Proof of Theorem \ref{thm:generalized_main}}

We now explain how the proof of Theorem \ref{thm:main} adapts to the latent-variable setting. The argument follows the same three steps as before, with the only difference that the oracle-score matrix must first be centered conditionally on the latent variables.

Let $\mathcal G=\sigma(L_t:t\ge1)$ be the $\sigma$-algebra generated by the sequence $(L_t)_{t\geq1}$. Conditionally on $\mathcal{G},$ the variables $(V_{n,t})_{n,t}$ are independent. Moreover, $\E[V_{n,t}\mid\mathcal{G}]=\phi(L_t)$ and $\Var[V_{n,t}\mid\mathcal{G}]=\psi(L_t).$ Consider $A_{n,t}$ such that $V_{n,t}=A_{n,t} + \phi(L_t)$ for each fixed $n,t.$ Then, in matrix form, this becomes $V = A + \mathbf{1}_N\phi_T^\top$ where $\phi_T = (\phi(L_1),\ldots,\phi(L_T))^\top.$ Therefore, 
\be\la{eq:decomp_gene_terms}\frac1T VV^\top= \frac1T AA^\top + \frac1T A\phi_T \mathbf{1}_N^\top + \frac1T \mathbf{1}_N\phi_T^\top A^\top + \frac{\|\phi_T\|^2}{T}\mathbf{1}_N\mathbf{1}_N^\top.\ee
The sum of the last three terms on the right-hand side has rank at most three. Thus, by Lemma \ref{lem:rank_ineq}, $$\|F^{C_N^{\mathrm{or}}} - F^{T^{-1}A A^\top}\|_{\infty} \le \frac{3}{N} \longrightarrow 0,$$ which means $C_N^{\mathrm{or}}$ and $T^{-1}A A^\top$ have the same LSD. 

Set $\psi_t = \psi(L_t), \, D_T =\diag(\psi_1,\ldots,\psi_T)$ and $w_{t,n}=\frac{A_{n,t}}{\sqrt{\psi_t}}.$ Then,
$$\frac1TAA^\top=\frac1T W_T^\top D_T W_T$$ where $W_T=(w_{t,n})_{1\le t\le T,\ 1\le n\le N}\in\R^{T\times N}.$ Conditionally on $\mathcal{G},$ the entries of $W_T$ are independent, centered, and have variance one: 
$$\E[w_{t,n}\mid\mathcal{G}]=0, \qquad \E[w_{t,n}^2\mid\mathcal{G}]=1.$$ Since $|g|\le\sqrt{3},$ one has $|A_{n,t}|\le2\sqrt{3}$ and therefore $w_{t,n}^2\le \frac{12}{\psi_t}.$ Arguing as in the proof of Proposition \ref{prop:oracle-score-lsd}, one has, for every $\eta>0,$
$$\frac1{NT}\sum_{t=1}^T\sum_{n=1}^N
\E\!\!\Big[w_{t,n}^2\,\mathbf 1_{\{|w_{t,n}|>\eta\sqrt N\}} \mid \mathcal{G}\Big] \le \frac1T \sum_{t=1}^T\mathbf{1}_{\{\psi_t\le12/(\eta^2N)\}}.$$ The variables $(\psi_t)_t$ are i.i.d.\ with common law $\mu_{\psi}$ which satisfies $\mu_\psi(\{0\})=0$ since $\psi(L)>0$ almost surely. Thus, the right-hand side converges to zero almost surely, and the deterministic conditioning argument used in Proposition \ref{prop:oracle-score-lsd} shows that Lemma \ref{lem:generalized-mp} applies conditionally on $\mathcal G$. It follows that $$F^{T^{-1} AA^\top} \Longrightarrow F_{q,\mu_\psi},$$ and the same convergence holds for $C_N^{\mathrm{or}}$ by the rank inequality above. Moreover, the proofs of Propositions \ref{prop:ecdf-pop-comparison} and \ref{prop:true_spearman_same_lsd} apply without change, since they only use the common continuous marginal distribution and the boundedness of the rank scores. Therefore, $$F^{R_N} \Longrightarrow F_{q,\mu_\psi} \qquad \text{almost surely}.$$

 
\section{Appendix}\la{sec:appendix}

\subsection{Proof of Corollary \ref{coro:moment-proof}}\la{app:moment-proof}

The formula for the atom at the origin is a direct consequence of \cite{silverstein1995analysis}. We now prove $F_{q,\mu_s}$ is compactly supported.
Let $S_{q,\mu_s}\!=\operatorname{supp}(F_{q,\mu_s})$ and define
$$
x_q(m)=
\frac{1}{m}
+
\int_{[0,3]}\frac{s}{1-qsm}\,d\mu_s(s).
$$
By the support characterization of Silverstein and Choi
\cite[Theorems 4.1--4.2]{silverstein1995analysis}, applied to the fixed-point equation
of Theorem \ref{thm:main}, for every $x\neq 0$,
\be
x\in S_{q,\mu_s}^{c}
\quad\Longleftrightarrow\quad
x=x_q(m)
\text{ for some } m\in B_q \text{ such that } x_q'(m)<0,
\la{eq:support_charac}
\ee
where $B_q=\{m\in\R:\ m\neq 0,\ 1-qsm\neq 0
\text{ for all } s\in\operatorname{supp}(\mu_s)\}.$
Since $\operatorname{supp}(\mu_s)\subset[0,3]$, choose $\varepsilon_q=\frac{1}{6(q+\sqrt q)}.$
For $m\in(0,\varepsilon_q)$ and $s\in[0,3]$, we have $1-qsm \ge 1-3q\varepsilon_q>0.$ Hence $(0, \varepsilon_q)\subset B_q$. Moreover, $x_q$ is differentiable on $(0, \varepsilon_q)$ and
$$
x_q'(m)=
-\frac{1}{m^2}
+
q\int_{[0,3]}\frac{s^2}{(1-qsm)^2}\,d\mu_s(s).
$$
One readily proves that $x_q'(m)<0$ on $(0, \varepsilon_q)$. Therefore, \eqref{eq:support_charac} yields $x_q((0, \varepsilon_q))\subset S_{q,\mu_s}^{c}.$
Finally, $x_q$ is continuous and strictly decreasing on $(0,\varepsilon_q)$, and $\lim_{m\downarrow 0}x_q(m)=+\infty.$ Thus $x_q((0, \varepsilon_q))=(L_q,\infty)$ for some finite $L_q>0$. This concludes that $\operatorname{supp}(F_{q,\mu_s})\subset[0,L_q].$ Since $F_{q,\mu_s}$ is compactly supported, it has finite moments and finite free cumulants of all orders. Moreover, given that $0 \leq s \leq 3$, we can expand \eqref{eq:r_transform} for $w$ near 0, 
$$
R_{F_{q,\mu_s}}\!(w) = \sum_{k \geq 1} q^{k-1}\E[s(\sigma)^k]w^{k-1},
$$
which identifies the free cumulants $\kappa_k = q^{k-1}\E[s(\sigma)^k]$ for $k\geq 1$. The first two moments follow directly from the moment-cumulant formula.

\subsection{Proof of Proposition \ref{prop:binary-spin}}\la{app:proof-binary-spin}
Denote by $F_\sigma$ the distribution function of $\sigma$. Using the assumptions on $\xi$ and $\sigma$, the marginal distribution function $F$ of $x=\sigma\xi$ satisfies, for $a>0$,
$$
        F(a)
        =
        \mathbb P(\sigma\xi\leq a)
        =
        \frac12+\frac12\mathbb P(\sigma\leq a)
        =
        \frac12+\frac12 F_\sigma(a).
$$
Similarly, $F(-a)=\frac12-\frac12 F_\sigma(a).$
Therefore, $g(a)=\sqrt 3\,F_\sigma(a)$ and $g(-a)=-\sqrt 3\,F_\sigma(a).$
Hence
$$
        s(a)
        =
        \mathbb E\!\left[g(a\xi)^2\right]
        =
        3F_\sigma(a)^2.
$$
Since $F_\sigma(\sigma)$ is uniformly distributed on $(0,1)$, it follows that $s(\sigma)/3 \sim \textrm{Beta}(1/2,1).$
Finally, applying Theorem \ref{thm:main} and computing the integral \eqref{eq:r_transform} proves the result.

\subsection{Proof of Proposition \ref{prop:beta-copula}}\la{app:proof-beta-copula}
Let $X=s(\sigma)/3.$ By assumption, $X\sim \operatorname{Beta}(\alpha,2\alpha)$, so its density is
$$
        f_\alpha(x)
        =
        \frac{1}{B(\alpha,2\alpha)}
        x^{\alpha-1}(1-x)^{2\alpha-1},
        \qquad 0<x<1,
$$
where $B(\gamma, \eta) = \frac{\Gamma(\gamma)\Gamma(\eta)}{\Gamma(\gamma+\eta)}$ for $\gamma\!>\!0, \eta\!>\!0$ and $\Gamma$ is the Gamma function. First, the normalization is consistent with the fact that $g(x)$ has unit variance, 
$$
        \mathbb E[s(\sigma)]
        =
        3\mathbb E[X]
        =
        3\frac{\alpha}{\alpha+2\alpha}
        =
        1.
$$
Using the general formula for the $R$-transform given by Theorem \ref{thm:main},
$$
        R_{F_{q,\mu_s}}\!(w)=
        \frac{3}{B(\alpha,2\alpha)}
        \int_0^1
        \frac{x^\alpha(1-x)^{2\alpha-1}}{1-3qwx}
        \,dx.
$$
By Euler's integral representation of the Gauss hypergeometric function,
$$
        \int_0^1
        x^\alpha(1-x)^{2\alpha-1}(1-3qwx)^{-1}
        \,dx
        =
        B(\alpha+1,2\alpha)\,
        {}_2F_1\left(1,\alpha+1;3\alpha+1;3qw\right).
$$
Since $\frac{B(\alpha+1,2\alpha)}{B(\alpha,2\alpha)} = \frac13,$ this proves the claim.

\subsection{Useful Lemmas}
The following lemma is a standard generalized Marčenko--Pastur theorem. Although stated in Theorem 4.3 of \cite{bai2010spectral} with i.i.d.\ entries, its proof in Section 4.5 is carried out under the weaker Lindeberg condition \eqref{eq:linderberg}, which allows independent, non-identically distributed triangular arrays. Using the notation of their Theorem 4.3, we use this triangular-array form with $A_n=0$, $n=N$, $p=T$, $X_n=W_T^\top$, and $T_n=D_T$, followed by the rescaling from $1/N$ to $1/T$.
\blem
\la{lem:generalized-mp}
Let $N=N(T)$ be such that $\frac{N}{T}\rightarrow q\in(0,\infty).$ For each $T$, let $d_1^{(T)},\ldots,d_T^{(T)}\geq 0$ be deterministic $T$-dependent real numbers. Let $D_T=\diag\bigl(d_1^{(T)},\ldots,d_T^{(T)}\bigr)$ and assume that the empirical distribution function of $\{d_1^{(T)},\ldots,d_T^{(T)}\}$ converges to a compactly supported probability measure $\nu$ on $[0,\infty)$ as $T\to \infty$.
Let
$$
W_T=\bigl(w_{t,n}^{(T)}\bigr)_{1\leq t\leq T,\ 1\leq n\leq N}
\in \R^{T\times N}
$$
be a random matrix with independent real entries satisfying
$$
\E\bigl[w_{t,n}^{(T)}\bigr]=0,
\qquad
\E\bigl[(w_{t,n}^{(T)})^2\bigr]=1,
\qquad
1\leq t\leq T,\quad 1\leq n\leq N.
$$
Assume moreover that, for every $\eta>0$,
\be
\frac{1}{NT}
\sum_{t=1}^T\sum_{n=1}^N
\E\!\left[
(w_{t,n}^{(T)})^2
\mathbf 1_{\{|w_{t,n}^{(T)}|>\eta\sqrt{N}\}}
\right]
\longrightarrow 0.
\la{eq:linderberg}
\ee
Then the empirical spectral distribution of
$$
S_T=\frac{1}{T}W_T^\top D_T W_T
$$
converges almost surely to a deterministic probability measure $F_{q,\nu}$ on
$[0,\infty)$. Moreover, its Stieltjes transform $m(\cdot)$ is the unique function $m:\C^+\to\C^-$ satisfying
$$
z=
\frac{1}{m(z)}
+
\int_{[0,\infty)}
\frac{x}{1-qx\,m(z)}\,d\nu(x),
\qquad z\in\C^+.
$$
\elem

\blem[Corollary A.42 of \cite{bai2008large}]\la{coro:coro-a42}
For two $N\times T$ matrices
$A$ and $B$,
$$
    L^4\left(F^{AA^\top},F^{BB^\top}\right)
    \leq
    \frac{2}{N^2}
    \Tr\left(AA^\top+BB^\top\right)
    \Tr\left((A-B)(A-B)^\top\right).
$$
\elem

\blem[Theorem A.43 of \cite{bai2010spectral}] \label{lem:rank_ineq}
    Let $A$ and $B$ be two $N \times N$ Hermitian matrices. Then,
    \begin{equation}
        \|F^{A} - F^{B}\|_{\infty} \leq \frac{1}{N} \mathrm{rank}(A -B)
    \end{equation}
\elem


\section*{Acknowledgments}
We thank Jules Desperin and Iacopo Mastromatteo for useful discussions.


\bibliographystyle{plain}
\bibliography{sample}

\end{document}